\documentclass[11pt]{article}

\usepackage[T1]{fontenc}
\usepackage[utf8]{inputenc}
\usepackage{amsmath,amsthm,mathtools,bm}
\usepackage{newtxtext,newtxmath}
\usepackage{microtype}
\usepackage[a4paper,margin=27mm,headsep=8mm]{geometry}
\usepackage{booktabs,tabularx,array}
\usepackage{enumitem}
\usepackage{xcolor}
\usepackage{graphicx}
\usepackage{float}
\usepackage{tikz}
\usetikzlibrary{arrows.meta,positioning,calc}
\usepackage{tcolorbox}
\usepackage{natbib}
\usepackage{xurl}
\usepackage{hyperref}
\usepackage[nameinlink,capitalise,noabbrev]{cleveref}

\definecolor{deepblue}{RGB}{28,62,94}
\definecolor{softblue}{RGB}{235,241,246}
\definecolor{softgray}{RGB}{246,247,248}
\definecolor{mutedred}{RGB}{135,52,52}

\hypersetup{
  colorlinks=true,
  linkcolor=deepblue,
  citecolor=deepblue,
  urlcolor=deepblue,
  pdftitle={Prime-Exponent Transition Geometry and Divisor Barriers Between Consecutive Highly Composite Numbers},
  pdfauthor={Marco Mantovanelli},
  pdfsubject={Highly composite numbers and prime-exponent transition geometry},
  pdfkeywords={highly composite numbers, divisor function, prime-exponent lattice, bottleneck paths, discrete optimization}
}

\allowdisplaybreaks[2]
\setlist{itemsep=0.2em,topsep=0.3em}
\numberwithin{equation}{section}

\newtheoremstyle{blueplain}
  {0.7em}{0.7em}{\itshape}{}
  {\color{deepblue}\bfseries}{.}{0.55em}{}
\newtheoremstyle{bluedef}
  {0.7em}{0.7em}{\normalfont}{}
  {\color{deepblue}\bfseries}{.}{0.55em}{}
\theoremstyle{blueplain}
\newtheorem{theorem}{Theorem}[section]
\newtheorem{proposition}[theorem]{Proposition}
\newtheorem{lemma}[theorem]{Lemma}
\newtheorem{corollary}[theorem]{Corollary}
\newtheorem{conjecture}[theorem]{Conjecture}
\theoremstyle{bluedef}
\newtheorem{definition}[theorem]{Definition}
\newtheorem{remark}[theorem]{Remark}
\newtheorem{computation}[theorem]{Exact computation}

\crefname{theorem}{Theorem}{Theorems}
\crefname{proposition}{Proposition}{Propositions}
\crefname{lemma}{Lemma}{Lemmas}
\crefname{corollary}{Corollary}{Corollaries}
\crefname{conjecture}{Conjecture}{Conjectures}
\crefname{definition}{Definition}{Definitions}
\crefname{remark}{Remark}{Remarks}
\crefname{computation}{Exact computation}{Exact computations}

\newcommand{\N}{\mathbb N}

\newcommand{\Bcal}{\mathcal B}

\newcommand{\Gcal}{\mathcal G}

\newcommand{\vpp}[2]{v_{#1}(#2)}
\newcommand{\betageo}{\beta^{\mathrm{geo}}}
\newcommand{\supp}{\operatorname{supp}}

\newcommand{\zenododoi}{\href{https://doi.org/10.5281/zenodo.21983543}{\nolinkurl{10.5281/zenodo.21983543}}}

\begin{document}

\begin{center}
  {\fontsize{16.5}{20}\selectfont\bfseries
  \mbox{Prime-Exponent Transition Geometry and Divisor Barriers}\\[0.25em]
  \mbox{Between Consecutive Highly Composite Numbers}\par}
  \vspace{0.9em}
  {\large Marco Mantovanelli\par}
  \vspace{0.25em}
  {\normalsize\href{mailto:marco@mantovanelli.de}{\textcolor{black}{\nolinkurl{marco@mantovanelli.de}}}\par}
  \vspace{0.12em}
  {\normalsize\href{https://orcid.org/0009-0002-0631-293X}{\textcolor{black}{ORCID: 0009-0002-0631-293X}}\par}
\end{center}
\vspace{0.4em}

\begin{abstract}
Let $d(n)$ be the divisor function and let $H<H'$ be consecutive highly composite numbers.  We study directed unit moves between their prime-exponent vectors under the hard ceiling $z\le H'$.  The normalised capacity of such a geodesic is its smallest divisor count divided by $d(H)$, giving a finite fixed-endpoint maximin problem.

The exact record enumeration first finds a failure of the static surrogate $d(\gcd(H,H'))\ge d(H)/2$ at $48\,886\,437\,600<64\,250\,746\,560$, where the ratio is $4/9$.  For every state $z$ in the exponent box, however, we prove
\[
 d(z)d(HH'/z)\ge d(H)d(H')
\]
and deduce the record-box gap: no box state lies numerically strictly between the two records.  We also give an exact dynamic-programming recursion, solve the strata $L_-\le1$, and reduce the complete $L_-=2$ problem to an explicit divisor-selection functional.

 A computer-assisted enumeration through $10^{70}$ produces $889$ records and $888$ transitions.  Although the static half-gcd bound fails $119$ times, every computed geodesic capacity is at least $1/2$; equality occurs in exactly the $124$ transitions that lose an exponent-one support prime.  Independently checked certificates cover all $301$ transitions with $L_-=2$.  The corresponding universal half-capacity bound and equality classification remain open beyond the proved strata and the verified range.
\end{abstract}

\medskip
\noindent\textbf{MSC 2020.}
Primary 11A25; Secondary 11N56, 11Y16.

\smallskip
\noindent\textbf{Keywords.}
Highly composite numbers; divisor function; prime-exponent lattice; bottleneck paths; discrete optimisation; record values.

\section{Introduction}
\label{sec:introduction}

Highly composite numbers are the strict record holders of the divisor function:
\begin{equation}
 H\ \text{is highly composite}
 \quad\Longleftrightarrow\quad
 d(m)<d(H)\quad(1\le m<H).
 \label{eq:hcn-definition-intro}
\end{equation}
Ramanujan introduced and analysed them as a concrete route into the maximal order of $d(n)$, together with the superior highly composite envelope that exposes the relevant prime-exponent thresholds \citep{Ramanujan1915,Ramanujan1997}.  The classical theory describes the shape of an individual record: its prime support is an initial segment, its exponents are non-increasing, and the exponent at a fixed prime is tightly controlled by the largest prime factor.  These themes were developed further by Alaoglu and Erd\H{o}s, Erd\H{o}s, Nicolas, Robin, and others \citep{AlaogluErdos1944,Erdos1944,Nicolas1971,Robin1983,NicolasRobin1983}.  Modern accounts and extensions continue to connect highly composite states with maximal-order problems, the Riemann zeta-function, and function-field analogues \citep{AndrewsBerndt2012,Tenenbaum2015,Afshar2021,Nicolas2022,Akatsuka2024,Teo2026}.

\subsection*{Relation to previous work}

\paragraph{Classical ingredients.}
Prime-exponent profiles, superior highly composite numbers, and constrained optimisation are classical tools in the subject.  Nicolas studied the distribution of ordinary highly composite numbers between consecutive superior highly composite numbers, using a benefit relative to a superior record to control exponent changes and to bound the record-counting function $Q(X)$ \citep{Nicolas1971}.  Robin subsequently formulated record generation as an integer-optimisation problem: his dynamic programme lists all highly composite numbers below a prescribed bound, while a second method combines Lagrange multipliers with Nicolas's benefit function to construct a block of records around a prescribed scale \citep{Robin1983}.  Nicolas and Robin used the superior highly composite envelope to determine the global maximum of $(\log d(n)\log\log n)/(\log 2\log n)$; geometrically, superior records occur as vertices of the concave envelope of the points $(\log n,\log d(n))$ \citep{NicolasRobin1983}.

\paragraph{Present contribution and scope of novelty.}
Dynamic programming itself is therefore not new in the theory of highly composite numbers.  Robin's programme optimises the endpoint integer under a size constraint, whereas the recursion used here fixes two consecutive ordinary records $H<H'$, restricts the intermediate states to their directed exponent box under the ceiling $z\le H'$, and maximises the smallest divisor count encountered along a geodesic.  We are not aware of an earlier formulation of this fixed-endpoint maximin problem in the cited literature.  The present paper develops the transition invariant $\betageo$, applies the complement map $z\mapsto HH'/z$ to obtain the record-box gap, and derives the exact two-deletion reduction and its finite certificates.  This limited comparison is not an absolute priority claim.  In particular, the coordinatewise divisor-product inequality is an elementary instance of discrete concavity; what is specific to the present argument is its application to consecutive divisor records and the resulting transition geometry.

Rather than describing a record $H$ in isolation, we now study the geometry between two consecutive records
\[
 H<H'.
\]
Write
\begin{equation}
 H=\prod_p p^{a_p},
 \qquad
 H'=\prod_p p^{b_p}.
 \label{eq:endpoint-vectors-intro}
\end{equation}
The vector $a=(a_p)_p$ must be rearranged into $b=(b_p)_p$.  We allow one elementary layer move at a time: an exponent is increased or decreased by one.  A path is constrained by the arithmetic ceiling $z\le H'$ at every intermediate state.  Along such a path, the multiplicative size $z$ plays the role of a hard resource constraint, while
\begin{equation}
 S(z):=\log d(z)=\sum_p\log(\vpp pz+1)
 \label{eq:divisor-entropy-intro}
\end{equation}
provides an additive quantity that we call \emph{divisor entropy}.  This is only a terminological analogy for $\log d(z)$; no Shannon entropy or underlying probability distribution is used.  The central maximin quantity asks how much of the old record entropy can be retained during the transition.

This setup is deliberately elementary.  It uses only unique factorisation, the divisor formula, and the strict record property.  Nevertheless, it produces a nontrivial dynamical structure.  The coordinatewise gcd is the lowest corner reached by the naive ``delete first, insert later'' route, but it need not be the best route.  Entropy-restoring insertions may be interleaved between compulsory deletions.  The resulting barrier problem is not a restatement of the static arithmetic of $\gcd(H,H')$.

\subsection*{The first surprise: the static half bound is false}

The most tempting conjecture is
\begin{equation}
 d(\gcd(H,H'))\ge\frac12d(H).
 \label{eq:half-gcd-intro}
\end{equation}
It fails.  Exact record enumeration shows that the first counterexample is
\begin{align}
 H&=48\,886\,437\,600
   =2^5 3^3 5^2 7^2 11\,13\,17\,19,\nonumber\\
 H'&=64\,250\,746\,560
   =2^6 3^3 5\,7\,11\,13\,17\,19\,23,
 \label{eq:first-counterexample-intro}
\end{align}
with $d(H)=3456$ and $d(\gcd(H,H'))=1536$.  Exact enumeration through $10^{70}$ finds $119$ such failures and a minimum observed static ratio $128/405$.

The transition itself does not fall below one half.  One optimal geodesic for the first counterexample is
\begin{equation}
\begin{split}
48\,886\,437\,600
&\longrightarrow 6\,983\,776\,800
\longrightarrow 13\,967\,553\,600\\
&\longrightarrow 2\,793\,510\,720
\longrightarrow 64\,250\,746\,560,
\end{split}
\label{eq:first-counterexample-path-intro}
\end{equation}
whose divisor counts are
\[
3456\longrightarrow2304\longrightarrow2688\longrightarrow1792\longrightarrow3584.
\]
The capacity is therefore $1792/3456=14/27>1/2$.  The insertion of a factor $2$ before the second deletion avoids the deeper gcd valley.  This distinction between a static overlap and an optimally scheduled path motivates the whole paper.

\subsection*{Main results and status}

The paper is organised around three logically different layers.  The distinction matters, because the finite experiment is extensive but does not replace a universal proof.

\begin{tcolorbox}[colback=softgray,colframe=deepblue!70,boxrule=0.7pt,arc=1.2mm,left=1.2mm,right=1.2mm,top=1.0mm,bottom=1.0mm]
\small
\textbf{Unconditional theorems.}
The exponent-box complement $z\mapsto HH'/z$ obeys a divisor-product inequality, which yields the record-box gap and the tunnelling corollary.  The geodesic capacity satisfies an exact acyclic dynamic-programming recursion.  Transitions with $L_-\le1$ are solved exactly.  For $L_-=2$ the full path problem reduces to a finite divisor-selection problem, and crossing, source-profile packing, and compensated matching give explicit half-capacity certificates.

\medskip
\textbf{Exact finite verification.}
A profile enumeration through $10^{70}$ yields $889$ records.  Exact integer arithmetic verifies the maximin capacity for all $888$ consecutive transitions.  The greedy scheduler is checked independently against those optima, with every logarithmic slope comparison certified by arbitrary-precision interval arithmetic.  Further computer-assisted certificate audits are identified separately below.

\medskip
\textbf{Open statements.}
The universal half-capacity inequality $\betageo(H,H')\ge1/2$ and the equivalence between equality and loss of an exponent-one support prime remain conjectural outside the proved strata and the verified range.
\end{tcolorbox}

The decisive structural theorem is a reflection principle.  If $z$ lies coordinatewise between $H$ and $H'$, then $z^\sharp=HH'/z$ lies in the same box and
\begin{equation}
 d(z)d(z^\sharp)\ge d(H)d(H').
 \label{eq:reflection-preview}
\end{equation}
If a box state were numerically between two consecutive records, then its complement would be there as well, and both divisor counts would be at most $d(H)$; this contradicts \eqref{eq:reflection-preview}.  Consequently every interior state of a ceiling-admissible mixed geodesic lies below $H$.  The path does not drift through the interval $(H,H')$; it leaves the old record downward and returns to the new record only at the final insertion.  The language of a barrier or a tunnel is therefore not decorative: it is an exact consequence of the record property.

For two deletions the geometry collapses further.  Set
\begin{equation}
 g=\gcd(H,H'),\qquad C=H'/g,\qquad
 \delta=\frac{d(g)}{d(H)}.
 \label{eq:gcd-target-intro}
\end{equation}
If the deletion order is $r\mid s$, meaning that a layer at prime $r$ is removed before a layer at prime $s$, the best capacity for that order is
\begin{equation}
 \min\left\{\frac{j_r}{j_r+1},\ \delta
 \max_{\substack{Q\mid C\\Q\le C/s}}
 \frac{d(gQ)}{d(g)}\right\}.
 \label{eq:l2-preview}
\end{equation}
Thus the scheduling problem becomes a divisor-selection problem in the target-only factor $C$.  This formula is the technical hinge of the paper.  It isolates the repair that must be accumulated before the second deletion and makes it possible to certify the half barrier without scanning all paths.

The remaining sections develop this arc.  \Cref{sec:framework} defines the state space, capacity, deletion distance, and exact recursion.  \Cref{sec:static-overlap} disposes of the false gcd surrogate.  \Cref{sec:record-box} proves the reflection and gap theorems.  \Cref{sec:small-deletion} solves the strata $L_-\le2$ up to explicit arithmetic repair conditions.  \Cref{sec:half-certificates} consolidates the principal half-capacity certificates.  \Cref{sec:scheduling} translates the moves into prime layers and proves an insertion-eager normal form.  \Cref{sec:computation} records the finite experiment and its reproducibility boundary.  The auxiliary repair-floor, frontier, scheduling, and certificate-classification details are deferred to the appendices.

\section{Prime-exponent state space and maximin capacity}
\label{sec:framework}

\subsection{Records and exponent profiles}

For an integer $n\ge1$, write
\begin{equation}
 n=\prod_{p}p^{a_p},
 \qquad
 a_p=\vpp pn,
 \qquad
 d(n)=\prod_p(a_p+1),
 \label{eq:prime-profile}
\end{equation}
where only finitely many $a_p$ are nonzero.  We identify $n$ with the finite-support vector $a(n)=(a_p)_p$.  The following standard observations explain why this coordinate system is natural for highly composite numbers.

\begin{lemma}[Basic shape of a divisor record]
\label{lem:basic-hcn-shape}
If $H=\prod_p p^{a_p}$ is highly composite, then its prime support is an initial segment and the exponents are non-increasing with the primes: if $p<q$, then $a_p\ge a_q$.  In particular, after listing the primes as $p_1<p_2<\cdots<p_k$, one may write
\begin{equation}
 H=p_1^{a_1}\cdots p_k^{a_k},
 \qquad
 a_1\ge a_2\ge\cdots\ge a_k\ge1.
 \label{eq:hcn-profile-shape}
\end{equation}
\end{lemma}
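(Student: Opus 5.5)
The plan is to prove both assertions by an exchange argument against the strict record property \eqref{eq:hcn-definition-intro}: from $H$ we construct an integer $m<H$ with $d(m)=d(H)$. Such an $m$ satisfies $m<H$ but not $d(m)<d(H)$, which is a contradiction. The key observation is that $d(n)=\prod_p(a_p+1)$ depends only on the multiset of exponents, not on which primes carry them. So any permutation of exponents among primes preserves $d$, while it changes the size of the number.

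\textbf{Monotonicity first.} Suppose $p<q$ but $a_p<a_q$. Define $m$ from $H$ by swapping the exponents at $p$ and $q$, so $m=H\cdot p^{a_q-a_p}q^{a_p-a_q}$. Then $d(m)=d(H)$ because the exponent multiset is unchanged. Moreover $m/H=(p/q)^{a_q-a_p}<1$, since $a_q-a_p\ge1$ and $p<q$. Hence $m<H$ with $d(m)=d(H)$, contradicting the strict record property. Therefore $a_p\ge a_q$ whenever $p<q$.

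\textbf{Initial-segment support.} This follows directly from monotonicity. If some prime $q$ divides $H$ and $p<q$, then $a_p\ge a_q\ge1$, so $p\mid H$ as well. Thus the support is closed downward, i.e.\ it is $\{p_1,\dots,p_k\}$ for the first $k$ primes. Writing $a_i=a_{p_i}$ then gives \eqref{eq:hcn-profile-shape} with $a_k\ge1$. The case $H=1$ (empty support, $k=0$) is trivially consistent.

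There is no real obstacle here. The only point requiring care is to use the strict form $d(m)<d(H)$ for all $m<H$ from \eqref{eq:hcn-definition-intro}. An exchange that merely preserves $d$ suffices precisely because the inequality is strict, so no increase in $d$ needs to be shown.
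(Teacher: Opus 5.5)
Your proof is correct and uses the same exchange argument as the paper: the exponent swap $M=H(p/q)^{a_q-a_p}<H$ preserves $d$ and so contradicts strict recordhood. The only difference is ordering. The paper handles the support in a separate step via $Hp/q$; there one actually gets $d(Hp/q)=\tfrac{2a_q}{a_q+1}\,d(H)\ge d(H)$, with equality only when $a_q=1$. You instead observe that the swap with $a_p=0$ already covers this case, so the initial-segment property follows directly from monotonicity.
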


\begin{proof}
If $q\mid H$ but a smaller prime $p$ does not divide $H$, then $Hp/q<H$ and $d(Hp/q)=d(H)$, contradicting strict recordhood.  Thus the support is an initial prime segment.  If $p<q$ but $a_p<a_q$, then
\[
 M=H\left(\frac pq\right)^{a_q-a_p}
\]
is an integer smaller than $H$ with the same multiset of exponents and hence $d(M)=d(H)$, again a contradiction.
\end{proof}

We use the strict version of recordhood throughout.  This ensures that $d(m)<d(H)$ for every $m<H$, a fact that will repeatedly convert size inequalities into strict divisor inequalities.

\subsection{Geodesics under a record ceiling}

Let
\[
 m=\prod_{i=1}^k p_i^{a_i},
 \qquad
 n=\prod_{i=1}^k p_i^{b_i},
\]
where zero exponents are appended as needed.  Define
\begin{equation}
 \delta_i=|b_i-a_i|,
 \qquad
 \sigma_i=\operatorname{sgn}(b_i-a_i).
 \label{eq:delta-sigma}
\end{equation}
A directed exponent-lattice geodesic from $m$ to $n$ changes one coordinate by one unit at every step and changes each coordinate only in the direction of its target.  Equivalently, it has the minimum possible length
\begin{equation}
 L(m,n)=\sum_i|b_i-a_i|.
 \label{eq:l1-length}
\end{equation}
Such a path is encoded by states
\begin{equation}
 r=(r_1,\ldots,r_k),
 \qquad 0\le r_i\le\delta_i,
 \label{eq:box-state}
\end{equation}
with associated integer and divisor count
\begin{equation}
 N(r)=\prod_{i=1}^k p_i^{a_i+\sigma_i r_i},
 \qquad
 D(r)=\prod_{i=1}^k(a_i+\sigma_i r_i+1).
 \label{eq:state-ND}
\end{equation}
The state set is the finite exponent box between $a$ and $b$.

\begin{definition}[Ceiling-admissible geodesic and capacity]
\label{def:capacity}
Let $X\ge\max\{m,n\}$.  A geodesic $\gamma:m\leadsto n$ is \emph{$X$-admissible} if every vertex $z\in\gamma$ satisfies $z\le X$.  Its divisor bottleneck is
\begin{equation}
 \operatorname{cap}(\gamma)=\min_{z\in\gamma}d(z).
 \label{eq:path-capacity}
\end{equation}
The optimal geodesic bottleneck is
\begin{equation}
 \Gcal_X(m,n)=
 \max_{\substack{\gamma:m\leadsto n\text{ geodesic}\\z\le X\text{ for all }z\in\gamma}}
 \min_{z\in\gamma}d(z).
 \label{eq:optimal-capacity}
\end{equation}
For consecutive highly composite numbers $H<H'$ we set
\begin{equation}
 \betageo(H,H'):=\frac{\Gcal_{H'}(H,H')}{d(H)}.
 \label{eq:beta-definition}
\end{equation}
\end{definition}

The normalisation makes $\betageo$ a retained fraction of the initial divisor record.  Since both endpoints are admissible and $d(H')>d(H)$, one always has $0<\betageo\le1$.

Separate the required moves into deletions and insertions:
\begin{equation}
 L_-(m,n):=\sum_p(a_p-b_p)_+,
 \qquad
 L_+(m,n):=\sum_p(b_p-a_p)_+.
 \label{eq:Lpm}
\end{equation}
Thus $L=L_-+L_+$.  Each deletion $j\to j-1$ multiplies $d$ by $j/(j+1)$, while each insertion $j-1\to j$ multiplies it by $(j+1)/j$.  The size multiplier is respectively $1/p$ or $p$.  The path problem is therefore a scheduling problem with exact multiplicative weights.

\subsection{Exact recursion}

The exponent box is an acyclic directed graph when states are ordered by $|r|_1$.  The widest-path recurrence is consequently exact and elementary.

\begin{theorem}[Exact maximin recursion]
\label{thm:dp}
 Fix a ceiling $X\ge\max\{N(0),N(\delta)\}=\max\{m,n\}$.  Put $F(r)=0$ for every inadmissible state.  For admissible states define $F(r)$ recursively by
\begin{equation}
 F(0)=D(0)
 \label{eq:dp-initial}
\end{equation}
and, for $r\ne0$,
\begin{equation}
 \boxed{
 F(r)=\min\left\{D(r),
 \max_{\substack{i:r_i>0\\N(r-e_i)\le X}}F(r-e_i)
 \right\}.}
 \label{eq:dp-recursion}
\end{equation}
 Here $\max\varnothing:=0$; equivalently, any state with no reachable admissible predecessor has value $F(r)=0$.  Then
\begin{equation}
 F(\delta)=\Gcal_X(m,n).
 \label{eq:dp-exact}
\end{equation}
The number of states before ceiling exclusions is $\prod_i(\delta_i+1)$.
\end{theorem}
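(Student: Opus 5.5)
The plan is to prove, by induction on $|r|_1$, the stronger invariant that for every state $r$ in the box
\[
 F(r)=\Phi(r):=\max_{\substack{\gamma:\,0\leadsto r\ \text{monotone}\\ \text{all vertices admissible}}}\ \min_{s\in\gamma}D(s),
\]
with the convention $\max\varnothing=0$. Here a monotone path means a sequence $0=s^0,s^1,\dots,s^{|r|_1}=r$ with $s^{t}-s^{t-1}$ a unit vector $e_i$. The first step is the dictionary between geodesics and such paths. A geodesic $m\leadsto n$ changes each coordinate only toward its target and has length $L(m,n)=\sum_i\delta_i$. Its vertices are therefore exactly $N(s^t)$ for a unit-step monotone lattice path from $0$ to $\delta$ in the box, and the correspondence is bijective. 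The condition $z\le X$ becomes $N(s)\le X$, and $d(z)=D(s)$. Hence $\Phi(\delta)=\Gcal_X(m,n)$, and it suffices to show $F=\Phi$ on all states. The recursion itself is well defined because every predecessor $r-e_i$ has strictly smaller $\ell^1$-norm, so the graph is acyclic and graded.

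For the induction, the base case is $r=0$. The only path is the single vertex $0$, which is admissible since $N(0)=m\le X$, so $\Phi(0)=D(0)=F(0)$. Inadmissible states have $\Phi=0=F$ by convention. Now take an admissible $r\neq0$. Every admissible monotone path to $r$ has a penultimate vertex $r-e_i$ with $r_i>0$, and that vertex is admissible. Removing the last vertex leaves an admissible path to $r-e_i$, so
\[
 \Phi(r)=\max_{i:\,r_i>0,\ N(r-e_i)\le X}\min\{\Phi(r-e_i),D(r)\}.
\]
Conversely, any optimal admissible path to such a predecessor extends by the step $e_i$ to an admissible path to $r$. The case of a predecessor with $\Phi(r-e_i)=0$, meaning it is unreachable, needs a short separate check. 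All $D$ values are positive, so $\Phi=0$ holds exactly when no admissible path exists, and $\min\{0,D(r)\}=0$ correctly contributes nothing. Because $D(r)$ is a constant, $\max_i\min\{\Phi_i,D(r)\}=\min\{D(r),\max_i\Phi_i\}$. With the induction hypothesis this gives the boxed recursion \eqref{eq:dp-recursion}. An empty predecessor set yields $0$ on both sides.

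The final claim is a count. Before any ceiling exclusions the states are exactly the vectors with $0\le r_i\le\delta_i$, and there are $\prod_i(\delta_i+1)$ of them.

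The only delicate point I expect is the bookkeeping of the value $0$ as an "unreachable" sentinel. It must not be confused with a genuine bottleneck, and this is guaranteed by $D\ge1$. The interchange of $\max$ and $\min$ is elementary, since both are monotone operations and $D(r)$ does not depend on $i$.
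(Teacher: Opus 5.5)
Your proposal is correct and follows the paper's proof: induction on $|r|_1$ with the invariant that $F(r)$ is the best admissible bottleneck over directed paths from $0$ to $r$, obtained by appending the final step to an optimal path to a predecessor. Your extra bookkeeping makes explicit what the paper leaves implicit: the geodesic--lattice-path bijection, the zero sentinel being safe because $D\ge1$, and the interchange $\max_i\min\{\Phi_i,D(r)\}=\min\{D(r),\max_i\Phi_i\}$.
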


\begin{proof}
Every directed geodesic reaching a nonzero state $r$ arrives from a predecessor $r-e_i$ with $r_i>0$.  If the best bottleneck to that predecessor is $F(r-e_i)$, appending $r$ changes the path minimum to
\[
 \min\{F(r-e_i),D(r)\}.
\]
If every admissible predecessor is unreachable, their values are all zero and the convention $\max\varnothing=0$ (or the maximum of these zero values) gives $F(r)=0$.  Otherwise, maximising over admissible predecessors gives \eqref{eq:dp-recursion}.  Induction on $|r|_1$ proves that $F(r)$ is the best bottleneck among all admissible directed paths from $0$ to $r$, and $r=\delta$ gives the claim.
\end{proof}

\begin{remark}[What the recursion does and does not prove]
The recursion gives a complete exact answer for any fixed pair and ceiling.  It does not by itself explain why a uniform lower bound should hold along the sequence of records.  The rest of the paper extracts arithmetic structure from the box so that many capacities can be certified without enumerating their paths.
\end{remark}

\subsection{The gcd corner and deletion distance}

Let $g=\gcd(m,n)$.  The path that deletes from $m$ to $g$ and then inserts from $g$ to $n$ is always available.

\begin{proposition}[The gcd-route certificate]
\label{prop:gcd-route}
For $m<n$, the delete-then-insert geodesic is $n$-admissible and has bottleneck exactly $d(g)$.  Hence
\begin{equation}
 \Gcal_n(m,n)\ge d(g),
 \qquad
 \frac{\Gcal_n(m,n)}{d(m)}\ge\frac{d(g)}{d(m)}.
 \label{eq:gcd-route}
\end{equation}
\end{proposition}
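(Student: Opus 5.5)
The plan is to exhibit the delete-then-insert route explicitly as a geodesic, check the ceiling condition vertex by vertex, and then locate the minimum of $d$ along it. Write $m=\prod_i p_i^{a_i}$, $n=\prod_i p_i^{b_i}$ and $g=\prod_i p_i^{\min(a_i,b_i)}$.

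\textbf{The route is a geodesic.} In the first phase we lower, one unit at a time in any fixed order, every coordinate with $a_i>b_i$ until it reaches $b_i$. This takes us from $m$ to $g$ in $L_-(m,n)$ steps. In the second phase we raise every coordinate with $b_i>a_i$ until it reaches $b_i$, going from $g$ to $n$ in $L_+(m,n)$ steps. Each coordinate moves only toward its target. The total length is therefore $L_-+L_+=L(m,n)$ by \eqref{eq:l1-length}, so the route is a directed geodesic in the sense used in \cref{thm:dp}.

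\textbf{The route is $n$-admissible.} Every vertex $z$ in the first phase divides $m$, so $z\le m<n$. Every vertex in the second phase divides $n$, because each of its exponents is at most $b_i$; hence $z\le n$. All vertices therefore satisfy $z\le n$, and the ceiling $X=n$ is legitimate, since $n=\max\{m,n\}$.

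\textbf{The bottleneck is $d(g)$.} A deletion step multiplies $d$ by $j/(j+1)<1$, so $d$ strictly decreases along the first phase, from $d(m)$ down to $d(g)$. An insertion step multiplies $d$ by $(j+1)/j>1$, so $d$ strictly increases along the second phase, from $d(g)$ up to $d(n)$. The minimum over the path is thus attained at the corner $g$, and equals $d(g)$. More simply, every vertex is a multiple of $g$, so $d(z)\ge d(g)$ by monotonicity of $d$ under divisibility, and $g$ itself lies on the path.

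\textbf{Conclusion.} Since $\Gcal_n(m,n)$ is a maximum over admissible geodesics, it is at least the bottleneck of this one, which is $d(g)$. Dividing by $d(m)>0$ gives the normalised form in \eqref{eq:gcd-route}. The only point that needs any care is the admissibility of the first-phase vertices. It is settled by the hypothesis $m<n$, since those vertices are divisors of $m$. Everything else is immediate.
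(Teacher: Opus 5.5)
Your proof is correct and matches the paper's argument: deletion states divide $m<n$, insertion states divide $n$, and $d$ falls monotonically to $d(g)$ and then rises. Your extra remark that every vertex is a multiple of $g$, so $d(z)\ge d(g)$, is a valid shortcut for locating the minimum.
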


\begin{proof}
Every deletion state divides $m$ and is therefore below $n$; its divisor count decreases monotonically until $g$.  Every insertion state after $g$ divides $n$, remains below the ceiling, and has increasing divisor count.  The minimum is exactly $d(g)$.
\end{proof}

The gcd overlap can be bounded solely in terms of the number of deleted layers.

\begin{lemma}[Overlap versus deletion distance]
\label{lem:deletion-overlap}
For positive integers $m=\prod p^{a_p}$ and $n=\prod p^{b_p}$,
\begin{equation}
 \frac{d(\gcd(m,n))}{d(m)}
 =\prod_{a_p>b_p}\frac{b_p+1}{a_p+1}
 \ge \prod_{a_p>b_p}\frac1{a_p-b_p+1}
 \ge 2^{-L_-(m,n)}.
 \label{eq:deletion-overlap}
\end{equation}
\end{lemma}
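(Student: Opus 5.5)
The plan is a direct factorwise computation using the multiplicativity of $d$.

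First, I will write the exponent of $\gcd(m,n)$ at $p$ as $\min(a_p,b_p)$. This gives
\[
 \frac{d(\gcd(m,n))}{d(m)}=\prod_p\frac{\min(a_p,b_p)+1}{a_p+1}.
\]
Every prime with $a_p\le b_p$ contributes the factor $1$. Only the primes with $a_p>b_p$ survive, each contributing $(b_p+1)/(a_p+1)$, which is the stated equality.

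Second, I will fix a prime with $a_p>b_p$ and put $k=a_p-b_p\ge1$. The required inequality is
\[
 \frac{b_p+1}{b_p+k+1}\ge\frac1{k+1}.
\]
After cross-multiplying, this becomes $(b_p+1)(k+1)\ge b_p+k+1$, that is, $b_pk\ge0$, which holds since $b_p\ge0$. Equality occurs exactly when $b_p=0$, so the bound is sharp for deleted exponent-zero targets.

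Third, I will use the elementary inequality $k+1\le 2^k$ for integers $k\ge1$, which follows by induction or Bernoulli. It gives $1/(k+1)\ge 2^{-k}$. Taking the product over the primes with $a_p>b_p$ turns the exponents into $\sum_p(a_p-b_p)_+=L_-(m,n)$, using the definition in \eqref{eq:Lpm}.

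No step is a genuine obstacle. The only point needing care is the second inequality: one must note that it uses $b_p\ge0$ and is tight exactly when the target exponent vanishes.
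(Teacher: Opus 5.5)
Your proposal is correct and follows essentially the same route as the paper: write $a_p=b_p+k$, bound $(b_p+1)/(b_p+k+1)\ge 1/(k+1)\ge 2^{-k}$ using $k+1\le 2^k$, and multiply over the decreasing coordinates to get $2^{-L_-(m,n)}$. You also spell out the opening equality via the exponent $\min(a_p,b_p)$ and reduce the middle inequality to $b_pk\ge0$, both of which the paper leaves implicit.
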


\begin{proof}
Write $a_p=b_p+r$ with $r\ge1$.  Then
\[
 \frac{b_p+1}{a_p+1}=\frac{b_p+1}{b_p+r+1}
 \ge\frac1{r+1}\ge2^{-r},
\]
because $r+1\le2^r$.  Multiplication over decreasing coordinates proves the result.
\end{proof}

A uniform bound on $L_-$ would therefore imply a positive uniform gcd-overlap constant.  The computations in \Cref{sec:computation} give $L_-\le5$ through $10^{70}$, but no universal bound is assumed or proved here.

\section{Static overlap fails}
\label{sec:static-overlap}

The gcd route suggests a simple replacement for the path problem: perhaps consecutive records always retain at least half of the old divisor count at their common coordinatewise corner.  This is false.

\begin{proposition}[Computer-assisted: first failure of the half-gcd bound]
\label{prop:first-half-gcd-counterexample}
The first counterexample in the increasing sequence of consecutive highly composite numbers is the pair
\begin{align*}
 H&=48\,886\,437\,600
   =2^5 3^3 5^2 7^2 11\,13\,17\,19,\\
 H'&=64\,250\,746\,560
   =2^6 3^3 5\,7\,11\,13\,17\,19\,23.
\end{align*}
It satisfies
\begin{equation}
 d(H)=3456,
 \qquad
 d(H')=3584,
 \qquad
 d(\gcd(H,H'))=1536,
 \label{eq:first-half-gcd-data}
\end{equation}
and hence
\begin{equation}
 \frac{d(\gcd(H,H'))}{d(H)}=\frac49<\frac12.
 \label{eq:first-half-gcd-ratio}
\end{equation}
\end{proposition}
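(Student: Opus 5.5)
The statement has two parts of different character. The first is a finite arithmetic claim: the displayed pair is consecutive highly composite numbers, and it yields the stated divisor counts and ratio. The second is a minimality claim: no earlier consecutive pair violates the half-gcd bound. I would treat them separately.

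For the arithmetic part, I would first read off the exponent profiles $a=(5,3,2,2,1,1,1,1,0)$ and $b=(6,3,1,1,1,1,1,1,1)$ on the primes $2,\dots,23$. Multiplying out the prime powers confirms the two integers, and
\[
 d(H)=6\cdot4\cdot3\cdot3\cdot2^4=3456,\qquad d(H')=7\cdot4\cdot2^7=3584.
\]
The gcd has profile $(5,3,1,1,1,1,1,1,0)$, so $d(g)=6\cdot4\cdot2^6=1536$. Alternatively, \Cref{lem:deletion-overlap} gives the ratio directly as the product over the two deleted coordinates, $(2/3)(2/3)=4/9<1/2$. This part is routine.

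The substantive content is that $H$ and $H'$ are both highly composite and consecutive, and that every earlier consecutive pair satisfies $d(\gcd)\ge d(H)/2$. I would obtain this from an exact enumeration of highly composite numbers up to $H'\approx 6.4\times10^{10}$, using only \Cref{lem:basic-hcn-shape}: a record must be $p_1^{a_1}\cdots p_k^{a_k}$ with non-increasing exponents and an initial-segment support. The steps are as follows.
\begin{enumerate}
\item Generate all such non-increasing profiles with value at most the bound. This is a small finite set.
\item Sort them by size and keep the strict running maxima of $d$. Since every integer has the same divisor count as its sorted-profile rearrangement, which is no larger, the strict maxima over profiles are exactly the highly composite numbers up to the bound.
\item For each consecutive pair, compute $d(\gcd)/d(H)$ in exact integer arithmetic. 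Check that the first pair where it drops below $1/2$ is the displayed one, and that $H'$ immediately follows $H$ in the record list.
\end{enumerate}

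The main obstacle is making this completeness argument airtight rather than doing any hard mathematics. The point to get right is the reduction in step 2: the strict running maximum over the profile list coincides with strict recordhood over all integers $m<H$. This holds because $d(m)=d(m^\ast)$ and $m^\ast\le m$, where $m^\ast$ is the sorted rearrangement of $m$. The remaining requirement is to show that the profile generation is exhaustive below the bound and is carried out in exact arithmetic. In the paper this would be presented as a computer-assisted verification, cross-checked against the known list of highly composite numbers (OEIS A002182).
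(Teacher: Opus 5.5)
Your proposal is correct and follows essentially the same route as the paper. The divisor counts and the ratio $4/9$ are elementary checks from the factorisations, and consecutivity and minimality come from an exact computer scan of non-increasing exponent profiles; this is the paper's \Cref{lem:enumeration-completeness}, run there through $10^{70}$ rather than only up to $H'$. Your rearrangement observation ($m^\ast\le m$ with $d(m^\ast)=d(m)$) is precisely what justifies that strict maxima over profiles are strict records over all integers, a step the paper's completeness lemma leaves implicit.
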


\begin{proof}
The displayed factorisations give
\[
 \gcd(H,H')=2^5 3^3 5\,7\,11\,13\,17\,19.
\]
The identity $d(\prod p^{e_p})=\prod(e_p+1)$ gives the three divisor counts, so these parts are elementary checks.  The record-theoretic assertions are computer-assisted.  The complete profile-minimum scan of \Cref{lem:enumeration-completeness}, run through $10^{70}$ as described in \Cref{sec:computation}, places $H$ and $H'$ consecutively in the strict record table and finds no earlier consecutive record pair violating the half-gcd bound.  Completeness of that scan therefore certifies both consecutivity and the word ``first''.
\end{proof}

The scan is not a search over arbitrary integers one by one; it enumerates all non-increasing prime-exponent profiles whose integer value is below the chosen bound and updates the strict record whenever a larger divisor count appears.

\begin{computation}[Static overlap through $10^{70}$]
\label{comp:gcd-stress}
The exact record table contains $889$ records and $888$ consecutive transitions.  Among them, $119$ satisfy
\[
 d(\gcd(H,H'))<\frac12d(H).
\]
The smallest observed ratio is
\begin{equation}
 \boxed{\min\frac{d(\gcd(H,H'))}{d(H)}=\frac{128}{405}.}
 \label{eq:min-gcd-ratio}
\end{equation}
The deletion distance never exceeds $5$ in this range.
\end{computation}

\begin{table}[t]
\centering
\caption{Static gcd overlap in the exact record table.  The column ``violations'' counts consecutive transitions with $d(\gcd(H,H'))<d(H)/2$.}
\label{tab:gcd-stress}
\small
\begin{tabular}{@{}rrrrr@{}}
\toprule
upper bound & records & transitions & violations & minimum ratio \\
\midrule
$10^6$    & 38  & 37  & 0   & $1/2$ \\
$10^{12}$ & 95  & 94  & 4   & $5/14$ \\
$10^{20}$ & 177 & 176 & 4   & $5/14$ \\
$10^{30}$ & 292 & 291 & 17  & $5/14$ \\
$10^{40}$ & 423 & 422 & 33  & $128/405$ \\
$10^{50}$ & 561 & 560 & 38  & $128/405$ \\
$10^{60}$ & 723 & 722 & 56  & $128/405$ \\
$10^{70}$ & 889 & 888 & 119 & $128/405$ \\
\bottomrule
\end{tabular}
\end{table}

The static failures are not marginal numerical noise.  They persist over many scales and reach roughly $0.316$.  At the same time, every computed dynamic capacity in this finite table is at least one half.  The correct object must therefore remember the order in which layers are exchanged.

For later comparison we record the exact primewise identity
\begin{equation}
 d(\gcd(m,n))\,d(\operatorname{lcm}(m,n))=d(m)d(n).
 \label{eq:gcd-lcm-identity}
\end{equation}
Indeed, for exponents $a,b$ one has
\[
 (\min\{a,b\}+1)(\max\{a,b\}+1)=(a+1)(b+1).
\]
The identity highlights why the gcd corner is only one member of a complementary pair.  The exponent box contains many other such pairs, and their divisor product is even larger.  That inequality drives the next section.

\section{Record-box duality and tunnelling}
\label{sec:record-box}

For two integers
\[
 m=\prod_p p^{a_p},
 \qquad
 n=\prod_p p^{b_p},
\]
define their closed exponent box by
\begin{equation}
 \Bcal(m,n):=
 \left\{\prod_p p^{e_p}:\min(a_p,b_p)\le e_p\le\max(a_p,b_p)\right\}.
 \label{eq:record-box-definition}
\end{equation}
Every exponent-lattice geodesic from $m$ to $n$ is contained in this finite set.

\subsection{Complementary states}

\begin{proposition}[Complementary-state reflection]
\label{prop:box-reflection}
For $z\in\Bcal(m,n)$, define
\begin{equation}
 z^\sharp:=\frac{mn}{z}.
 \label{eq:sharp-definition}
\end{equation}
Then $z^\sharp\in\Bcal(m,n)$, $(z^\sharp)^\sharp=z$, and
\begin{equation}
 \log z+\log z^\sharp=\log m+\log n.
 \label{eq:log-reflection}
\end{equation}
Moreover,
\begin{equation}
 \boxed{d(z)d(z^\sharp)\ge d(m)d(n).}
 \label{eq:divisor-reflection}
\end{equation}
\end{proposition}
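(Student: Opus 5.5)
The plan is to reduce everything to a single prime coordinate and then multiply over primes. Write $z=\prod_p p^{e_p}$ with $\min(a_p,b_p)\le e_p\le\max(a_p,b_p)$ for every $p$. Then
\[
 z^\sharp=\prod_p p^{a_p+b_p-e_p}.
\]
Set $u_p=\min(a_p,b_p)$ and $w_p=\max(a_p,b_p)$. Then $u_p+w_p=a_p+b_p$, so $u_p\le a_p+b_p-e_p\le w_p$. This shows at once that $z^\sharp$ is an integer lying in $\Bcal(m,n)$. The involution property $(z^\sharp)^\sharp=mn/(mn/z)=z$ is immediate. Taking logarithms of $z\,z^\sharp=mn$ gives \eqref{eq:log-reflection}.

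For \eqref{eq:divisor-reflection} we use the multiplicativity of $d$ across distinct prime coordinates:
\[
 d(z)d(z^\sharp)=\prod_p (e_p+1)(a_p+b_p-e_p+1),
 \qquad
 d(m)d(n)=\prod_p(a_p+1)(b_p+1).
\]
It therefore suffices to prove the one-variable inequality
\[
 (e+1)(s-e+1)\ge(u+1)(w+1)
 \qquad(u\le e\le w,\ s=u+w),
\]
with the right-hand side equal to $(a_p+1)(b_p+1)$.

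To prove it, note that the function $\phi(e)=(e+1)(s-e+1)$ is a concave quadratic in $e$, symmetric about $s/2$. It takes the same value $(u+1)(w+1)$ at both endpoints $e=u$ and $e=w$. By concavity its minimum over $[u,w]$ is attained at an endpoint, which gives the inequality. Alternatively, one can expand directly:
\[
 \phi(e)-(u+1)(w+1)=(e-u)(w-e)\ge0.
\]

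Since all factors are positive, multiplying the primewise inequalities over the finitely many primes in the support yields the boxed inequality. There is no real obstacle here. The only point needing care is the case $e_p$ at a prime where one endpoint exponent is zero, and the formula covers it uniformly. The identity \eqref{eq:gcd-lcm-identity} is recovered as the equality case in which every $e_p$ is an endpoint.
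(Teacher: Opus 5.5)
Your proof is correct and follows the paper's argument. The paper also works one prime at a time: it writes $e=a+r$ with $0\le r\le b-a$, notes that the reflected exponent $b-r$ stays in the interval, and uses the identity $(e+1)(a+b-e+1)-(a+1)(b+1)=r\bigl((b-a)-r\bigr)\ge0$ (which is your $(e-u)(w-e)$) before multiplying over primes.
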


\begin{proof}
Work prime by prime.  Suppose $a\le b$ and write the exponent of $p$ in $z$ as $e=a+r$ with $0\le r\le b-a$.  The reflected exponent is $a+b-e=b-r$, which remains in $[a,b]$.  Furthermore,
\begin{align}
 &(e+1)(a+b-e+1)-(a+1)(b+1)\nonumber\\
 &\hspace{35mm}=r\bigl((b-a)-r\bigr)\ge0.
 \label{eq:prime-concavity}
\end{align}
Multiplying \eqref{eq:prime-concavity} over all primes gives \eqref{eq:divisor-reflection}.  The other assertions follow directly from $z^\sharp=mn/z$.
\end{proof}

The primewise calculation is a discrete concavity statement: for a fixed sum of two exponents, the product of the shifted exponents is smallest at the endpoints of the interval.  The box reflection preserves total logarithmic size in pairs while increasing, or at least preserving, the paired divisor product.

\subsection{The gap between consecutive records}

\begin{theorem}[Record-box gap]
\label{thm:record-box-gap}
Let $H<H'$ be consecutive highly composite numbers.  Then
\begin{equation}
 \boxed{\Bcal(H,H')\cap(H,H')=\varnothing.}
 \label{eq:record-box-gap}
\end{equation}
Thus no exponent profile coordinatewise between the two records has numerical size strictly between them.
\end{theorem}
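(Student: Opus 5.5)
We argue by contradiction, using the complement map of \Cref{prop:box-reflection}. Suppose some $z\in\Bcal(H,H')$ satisfies $H<z<H'$. Put $z^\sharp=HH'/z$. By \Cref{prop:box-reflection}, $z^\sharp$ is an integer lying in $\Bcal(H,H')$. Because $z^\sharp=HH'/z$, the relation $H<z<H'$ implies $H<z^\sharp<H'$ as well. So both $z$ and $z^\sharp$ lie numerically strictly between the two consecutive records.

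Next we bound both divisor counts using consecutivity. Since $H$ and $H'$ are consecutive highly composite numbers, no integer $w$ with $H<w<H'$ is a record. We claim that every such $w$ satisfies $d(w)\le d(H)$. Suppose instead that some $w$ in $(H,H')$ had $d(w)>d(H)$, and take the smallest such $w$. Any $m<w$ falls into one of two cases. If $m\le H$, then $d(m)\le d(H)<d(w)$, using the strict record property of $H$ for $m<H$. If $H<m<w$, then $d(m)\le d(H)<d(w)$ by minimality of $w$. Hence $w$ would be highly composite, which contradicts consecutivity. Applying the claim gives $d(z)\le d(H)$ and $d(z^\sharp)\le d(H)$.

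Combining these bounds with \eqref{eq:divisor-reflection} gives
\[
d(H)d(H')\le d(z)d(z^\sharp)\le d(H)^2 .
\]
Therefore $d(H')\le d(H)$. But $H'>H$ is itself a strict record, so $d(H)<d(H')$, a contradiction. Hence no box state lies in $(H,H')$, which proves \eqref{eq:record-box-gap}.

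The only step needing care is the claim that states strictly between consecutive records have $d\le d(H)$, and it is settled by the minimal-counterexample argument above. Everything else follows directly from \Cref{prop:box-reflection}. One should also note that $z^\sharp$ is genuinely an integer in the box, and this is part of that proposition.
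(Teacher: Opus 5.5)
Your proof is correct and is the paper's argument: reflect $z$ to $z^\sharp=HH'/z$, note that both lie in $(H,H')$, and contradict the divisor-product inequality of \Cref{prop:box-reflection}. The only difference is that you spell out, via the minimal-counterexample argument, why consecutivity forces $d(w)\le d(H)$ for $H<w<H'$, a step the paper states without proof.
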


\begin{proof}
Assume that $H<z<H'$ for some $z\in\Bcal(H,H')$.  By \Cref{prop:box-reflection}, the complement
\[
 z^\sharp=\frac{HH'}{z}
\]
also belongs to the box.  Since $H<z<H'$, multiplication of the inequalities by $HH'/z$ gives $H<z^\sharp<H'$.  Consecutivity implies
\[
 d(z)\le d(H),
 \qquad
 d(z^\sharp)\le d(H).
\]
Hence $d(z)d(z^\sharp)\le d(H)^2$.  On the other hand, \eqref{eq:divisor-reflection} gives
\[
 d(z)d(z^\sharp)\ge d(H)d(H')>d(H)^2,
\]
a contradiction.
\end{proof}

\begin{corollary}[Record-to-record tunnelling]
\label{cor:tunnelling}
Let $\gamma$ be an $H'$-admissible exponent-lattice geodesic from consecutive highly composite numbers $H<H'$.  Every interior vertex satisfies
\begin{equation}
 z<H,
 \qquad
 d(z)<d(H).
 \label{eq:tunnel-below}
\end{equation}
If the transition contains both deletions and insertions, the first move is a deletion and the last move is an insertion.
\end{corollary}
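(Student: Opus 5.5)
The plan is to derive everything from \Cref{thm:record-box-gap}, the admissibility ceiling, and strict recordhood. Let $z$ be an interior vertex of $\gamma$. Every vertex of a directed exponent-lattice geodesic lies in $\Bcal(H,H')$, so $z\in\Bcal(H,H')$. Admissibility gives $z\le H'$. Since $z$ is interior and the geodesic is directed, $z\ne H$ and $z\ne H'$, because distinct box states have distinct exponent vectors. Therefore $z<H'$, and by \Cref{thm:record-box-gap} $z\notin(H,H')$, which leaves $z<H$. The strict record property \eqref{eq:hcn-definition-intro} of $H$ then gives $d(z)<d(H)$. This proves \eqref{eq:tunnel-below}.

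For the second assertion, suppose the transition has both deletions and insertions, so $L_-\ge1$ and $L_+\ge1$ and $L\ge2$. Then the state after the first move is interior. If that move were an insertion at a prime $p$, this state would be $pH>H$, contradicting $z<H$. So the first move is a deletion.

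For the last move, look at the state $w$ just before the final move; it is interior because $L\ge2$. If the last move were a deletion at a prime $p$, then $H'=w/p$ and $w=pH'>H'$. This violates the ceiling, and it also contradicts $w<H$. So the last move is an insertion.

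I expect no step to be a real obstacle. The only point needing care is that an interior vertex cannot equal an endpoint. This holds because along a directed geodesic each step strictly changes $|r|_1$, so interior states have $0<|r|_1<L$. The case distinction for the second assertion also needs both move types to be present: if $L_+=0$ there are no insertions, the first and last moves are both deletions, and the statement says nothing.
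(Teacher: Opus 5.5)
Your proof is correct and follows essentially the same route as the paper: box membership, the ceiling and the record-box gap force every interior vertex below $H$; strict recordhood then gives $d(z)<d(H)$; and the first-move and last-move claims follow from $pH>H$ and $pH'>H'$. Your $|r|_1$ argument that interior vertices differ from both endpoints is the same unique-factorisation point the paper uses.
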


\begin{proof}
Every geodesic state belongs to $\Bcal(H,H')$.  The ceiling excludes states above $H'$, and \Cref{thm:record-box-gap} excludes states in $(H,H')$.  By unique factorisation, a directed geodesic reaches the endpoint vectors only at its endpoints.  Thus every interior state is below $H$, and strict recordhood gives $d(z)<d(H)$.

In a mixed transition, an initial insertion would create an interior state larger than $H$, impossible by the first part.  If the final move were a deletion, its predecessor would be a prime multiple of $H'$ and would exceed the ceiling.  Hence the final move is an insertion.
\end{proof}

The corollary is the geometric core of the paper.  A mixed transition is forced to leave the old record through a downward move, remain under the old record wall, and return to the new record only at the end.  This immediately turns compulsory deletion layers into upper bounds for the best possible bottleneck.

\begin{proposition}[Mandatory-deletion upper bound]
\label{prop:mandatory-deletion-upper}
Let
\[
 H=\prod p^{a_p}<H'=\prod p^{b_p}
\]
be consecutive highly composite numbers, and suppose that at least one coordinate decreases.  Then
\begin{equation}
 \boxed{
 \betageo(H,H')\le
 \min_{p:a_p>b_p}\frac{b_p+1}{b_p+2}.}
 \label{eq:mandatory-deletion-upper}
\end{equation}
In particular, if a prime disappears from the support, so that $a_p>0=b_p$, then
\begin{equation}
 \betageo(H,H')\le\frac12.
 \label{eq:support-loss-upper}
\end{equation}
\end{proposition}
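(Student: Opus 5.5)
Fix any $H'$-admissible geodesic $\gamma$ from $H$ to $H'$ and any prime $p$ with $a_p>b_p$; the plan is to exhibit a vertex of $\gamma$ whose divisor count is at most $d(H)\,(b_p+1)/(b_p+2)$. Then maximising over $\gamma$ and minimising over such $p$ gives \eqref{eq:mandatory-deletion-upper}. Note that $p$ is fixed throughout, so the bound holds simultaneously for every decreasing coordinate.

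The natural witness is the vertex $z$ reached immediately after the \emph{last} deletion at $p$, that is, the move $p$-exponent $b_p+1\to b_p$. I would not use the predecessor $y$, since comparing divisor counts requires a bound on the state just after the move. By \Cref{cor:tunnelling}, every interior vertex satisfies $z<H$ and $d(z)<d(H)$. The vertex $z$ is interior, because in a transition with a deletion the last move is an insertion if insertions exist. If there are no insertions, $H'$ would divide $H$, contradicting $H<H'$. So $z\ne H'$, and $z\ne H$ since a move has been made.

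The predecessor $y=zp$ is either $H$ itself or an interior vertex. In either case $d(y)\le d(H)$, using $d(y)<d(H)$ from \Cref{cor:tunnelling} in the interior case. The move from $y$ to $z$ lowers the $p$-exponent from $b_p+1$ to $b_p$, so
\[
 d(z)=d(y)\frac{b_p+1}{b_p+2}\le d(H)\frac{b_p+1}{b_p+2}.
\]
Hence $\operatorname{cap}(\gamma)\le d(H)(b_p+1)/(b_p+2)$ for every admissible $\gamma$, and dividing by $d(H)$ gives the claim. When $b_p=0$ the factor is $1/2$, which is \eqref{eq:support-loss-upper}.

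The only delicate step is justifying $d(y)\le d(H)$ for the predecessor. This is precisely where the tunnelling corollary, and hence the record-box gap, is needed: without it, an intermediate state could have divisor count exceeding $d(H)$, and the single deletion factor would give no bound relative to $d(H)$. I would also check that $y$ is a genuine vertex of $\gamma$, which it is because the last deletion at $p$ occurs at some step of $\gamma$. Everything else is bookkeeping of the multiplicative factor $j/(j+1)$ with $j=b_p+1$.
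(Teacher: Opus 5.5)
Your proposal is correct and follows essentially the same route as the paper: take the last deletion $b_p+1\to b_p$ in the $p$-coordinate, note that its predecessor is $H$ or an interior vertex, so \Cref{cor:tunnelling} gives divisor count at most $d(H)$, and then multiply by $(b_p+1)/(b_p+2)$. Your extra check that the post-deletion vertex $z$ is itself interior is harmless but not needed, since the bound only uses $d(y)\le d(H)$ for the predecessor $y$.
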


\begin{proof}
Fix a decreasing prime $p$.  Along every geodesic there is a last deletion in that coordinate, from exponent $b_p+1$ to $b_p$.  Immediately before this deletion the state is either $H$ or an interior admissible box state, so \Cref{cor:tunnelling} gives divisor count at most $d(H)$.  The deletion multiplies the count by $(b_p+1)/(b_p+2)$.  Hence every path has a vertex with divisor count at most that factor times $d(H)$.  Take the minimum over decreasing coordinates.
\end{proof}

\begin{corollary}[Vanishing-prime obstruction]
\label{cor:vanishing-prime-obstruction}
Suppose an $H'$-admissible geodesic from $H$ to $H'$ has at least $d(H)/2$ divisors at every state and a prime $p$ disappears from the support.  Then $a_p=1$, and the move $p^1\to p^0$ is the first move.  In particular, no half-safe geodesic can accommodate two disappearing prime coordinates.
\end{corollary}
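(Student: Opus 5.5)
The plan is to combine the tunnelling corollary with the mandatory-deletion argument, applied to the one deletion that empties the coordinate of the vanishing prime $p$. Fix a half-safe $H'$-admissible geodesic $\gamma$ and a prime $p$ with $a_p>0=b_p$. Along $\gamma$ the coordinate at $p$ is decreased step by step from $a_p$ to $0$; consider the last of these moves, $p^1\to p^0$, and let $y$ be the state immediately before it and $y'$ the state immediately after. Then $d(y')=d(y)/2$.

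First, I will show that $y=H$. If $y$ were an interior vertex of $\gamma$, \Cref{cor:tunnelling} would give $d(y)<d(H)$, so $d(y')<d(H)/2$, contradicting half-safety. Note that $y'$ cannot be the endpoint $H'$ either, since $y'$ is reached by a deletion and not by an insertion. The case of a geodesic consisting only of deletions does not arise, because $H<H'$ rules out a pure deletion path.

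Second, I will deduce both conclusions from $y=H$. Since $y$ is the starting vertex, the move $p^1\to p^0$ is the first move of $\gamma$. At $H$ the exponent of $p$ must already be $1$ for this to be a move of the form $p^1\to p^0$, so $a_p=1$.

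Third, suppose two distinct primes $p$ and $q$ disappear. The second step forces both $p^1\to p^0$ and $q^1\to q^0$ to be the first move of $\gamma$, which is impossible because they are different moves. The only point needing care is the first step: confirming that the predecessor $y$ is either $H$ or interior, which holds because $y\ne H'$, and checking that the strict inequality in \eqref{eq:tunnel-below} gives strict failure of the half bound.
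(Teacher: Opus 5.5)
Your proposal is correct and follows the paper's proof essentially step for step: you isolate the last move $p^1\to p^0$ in the $p$-coordinate, observe that its predecessor is either $H$ or an interior vertex, exclude the interior case by the strict inequality of \Cref{cor:tunnelling}, and conclude that this is the first move with $a_p=1$. Your side remark that $y'\neq H'$ is true but not needed, since $d(y')<d(H)/2$ already contradicts half-safety whichever state $y'$ is.
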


\begin{proof}
The last deletion in the $p$-coordinate is $1\to0$ and halves the divisor count.  For the state after that deletion to have at least $d(H)/2$ divisors, the preceding state must have at least $d(H)$ divisors.  The predecessor cannot be $H'$, because its $p$-exponent is $0$, whereas the predecessor has $p$-exponent $1$; hence it is either $H$ or an interior vertex.  By \Cref{cor:tunnelling}, every interior vertex has fewer than $d(H)$ divisors.  Therefore the predecessor is $H$, so this $1\to0$ deletion is the first move.  There can have been no earlier deletion in the same coordinate, and consequently $a_p=1$.  Two disappearing primes would require two distinct first moves.
\end{proof}

The upper bound explains why one half is a natural boundary value: a disappearing exponent-one support prime creates an unavoidable factor $1/2$.  The harder direction is to show that the remaining moves can be scheduled so that no deeper loss occurs.

\section{Exact small-deletion strata}
\label{sec:small-deletion}

The tunnelling theorem makes the first deletion visible and often determines the whole path.  This section solves the cases $L_-\le1$ exactly and gives an exact reduction for $L_-=2$.

\subsection{Zero and one deletion}

\begin{theorem}[Exact capacity for $L_-\le1$]
\label{thm:one-deletion}
Let $H<H'$ be consecutive highly composite numbers.

\begin{enumerate}[label=\textup{(\roman*)}]
\item If $L_-(H,H')=0$, then the transition consists of a single insertion and
\begin{equation}
 \betageo(H,H')=1.
 \label{eq:zero-deletion-capacity}
\end{equation}

\item If $L_-(H,H')=1$, let $p$ be the unique decreasing coordinate and write
\[
 a_p=b_p+1=:a.
\]
Then
\begin{equation}
 \boxed{\betageo(H,H')=\frac{a}{a+1}\ge\frac12.}
 \label{eq:one-deletion-capacity}
\end{equation}
\end{enumerate}
\end{theorem}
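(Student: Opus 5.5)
For part (i), the plan is to use that $L_-=0$ means $a_p\le b_p$ for every $p$, so $H\mid H'$ and every geodesic state divides $H'$. By \Cref{cor:tunnelling}, every interior vertex would satisfy $z<H$, whereas each interior vertex of a pure-insertion path is a proper multiple of $H$. That is impossible, so there are no interior vertices and the geodesic has length one, i.e. $H'=pH$ for a single prime $p$. The path minimum is then $\min\{d(H),d(H')\}=d(H)$, which gives $\betageo=1$. Alternatively, without the length-one observation, the divisor counts along any pure-insertion path increase monotonically, so the minimum is $d(H)$ anyway. The only real check is admissibility: every state divides $H'$, hence is at most $H'$.

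For part (ii), the upper bound comes directly from \Cref{prop:mandatory-deletion-upper}. The only decreasing prime is $p$, with $b_p+1=a-1+1=a$, which gives $\betageo\le (b_p+1)/(b_p+2)=a/(a+1)$. For the lower bound I will exhibit the delete-first route, which is the gcd route of \Cref{prop:gcd-route}. Since there is exactly one deletion, $g=\gcd(H,H')=H/p$. The path deletes the single $p$-layer first and then performs all insertions. All of its states divide either $H$ or $H'$, so it is $H'$-admissible, and its bottleneck is $d(g)=d(H)\cdot a/(a+1)$. Hence $\betageo\ge a/(a+1)$, the two bounds coincide, and $a/(a+1)\ge 1/2$ because $a\ge1$.

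The main subtlety is confirming that \Cref{prop:mandatory-deletion-upper} applies here. It requires at least one decreasing coordinate, which holds when $L_-=1$. It also relies on the predecessor of the last $p$-deletion having at most $d(H)$ divisors, which is supplied by \Cref{cor:tunnelling}. If the transition were a pure deletion, so that $H'\mid H$, we would have $H'<H$, a contradiction; so the transition is mixed or has a nontrivial insertion part, and the corollary applies. No other obstacle is expected, since both bounds follow from results already established.
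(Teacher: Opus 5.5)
Your proposal is correct and follows essentially the paper's route. Part (i) is the same argument: interior states in $(H,H')$ are excluded via the record-box gap, which you reach through \Cref{cor:tunnelling}. In part (ii) you package the same tunnelling input as a sandwich between \Cref{prop:mandatory-deletion-upper} and the gcd route of \Cref{prop:gcd-route}, whereas the paper argues directly that every admissible geodesic begins with the unique deletion and thereafter only increases $d$, which gives the slightly stronger fact that every admissible geodesic has bottleneck exactly $\frac{a}{a+1}\,d(H)$.
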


\begin{proof}
If $L_-=0$, all moves are insertions.  Any nontrivial partial insertion state lies in the exponent box, is larger than $H$, and is at most $H'$.  The record-box gap excludes such an interior state.  Hence there is exactly one insertion, along which the divisor count increases, proving \eqref{eq:zero-deletion-capacity}.

Now suppose $L_-=1$.  A mixed geodesic must begin with its unique deletion by \Cref{cor:tunnelling}.  The move $a\to a-1$ multiplies the divisor count by $a/(a+1)$.  Every remaining move is an insertion.  Any partial collection of these insertions produces a divisor of $H'$, hence remains below the ceiling, and each insertion increases $d$.  The first post-deletion state is therefore the unique bottleneck.  This gives \eqref{eq:one-deletion-capacity}, which also saturates \Cref{prop:mandatory-deletion-upper}.
\end{proof}

This already covers $392$ of the $888$ transitions in the finite table: seven have $L_-=0$ and $385$ have $L_-=1$.

\subsection{A barrier at the top of the target}

The final insertion is constrained not only by the box but by the fact that every proper divisor of the new record lies below the old record level.

\begin{proposition}[Consecutive-record predecessor barrier]
\label{prop:predecessor-barrier}
Let $H<H'$ be consecutive highly composite numbers, write
\[
 H'=\prod_p p^{b_p},
 \qquad
 J:=\frac{d(H')}{d(H)}>1.
\]
Every proper divisor $M\mid H'$ satisfies
\begin{equation}
 d(M)\le d(H).
 \label{eq:proper-divisor-barrier}
\end{equation}
Consequently, for every $p\mid H'$,
\begin{equation}
 \boxed{J\le\frac{b_p+1}{b_p}.}
 \label{eq:record-jump-bound}
\end{equation}
Every required insertion in the $p$-coordinate has divisor multiplier at least $J$.
\end{proposition}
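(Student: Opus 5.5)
The plan is to use nothing but strict recordhood and the multiplicativity of $d$.

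\emph{Step 1: the barrier \eqref{eq:proper-divisor-barrier}.} Let $M\mid H'$ with $M\ne H'$. Then $M\le H'/2<H'$. Because $H$ and $H'$ are consecutive records, every $m<H'$ satisfies $d(m)\le d(H)$. If $m<H$, strict recordhood of $H$ gives $d(m)<d(H)$. If $m=H$, equality holds. If $H<m<H'$, then $d(m)>d(H)$ would mean some record lies strictly between $H$ and $H'$, contradicting consecutivity; so $d(m)\le d(H)$. Applying this with $m=M$ gives \eqref{eq:proper-divisor-barrier}. I will state this step carefully, since it is the only place the definition of ``consecutive'' enters. It is also the step needing the most care: we must ensure that a value $d(m)>d(H)$ with $m<H'$ really forces an intermediate record. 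It does, because the least such $m$ is itself a strict record lying in $(H,H')$.

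\emph{Step 2: the jump bound \eqref{eq:record-jump-bound}.} For $p\mid H'$ we have $b_p\ge1$, so $M=H'/p$ is a proper divisor. Its divisor count is
\[
d(M)=d(H')\,\frac{b_p}{b_p+1}.
\]
Step 1 gives $d(H')\,b_p/(b_p+1)\le d(H)$. Dividing by $d(H)$ yields $J\le (b_p+1)/b_p$. The strict inequality $J>1$ holds simply because $H'$ is a later record.

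\emph{Step 3: the multiplier claim.} A required insertion in the $p$-coordinate raises the exponent from $j-1$ to $j$ for some $j$ with $a_p<j\le b_p$. Its divisor multiplier is $(j+1)/j$. The function $j\mapsto (j+1)/j$ is decreasing, so for $j\le b_p$ we get
\[
\frac{j+1}{j}\ge\frac{b_p+1}{b_p}\ge J,
\]
where the last inequality is \eqref{eq:record-jump-bound}. Such a coordinate automatically has $b_p\ge1$, so $p\mid H'$ and Step 2 applies. This completes the plan; there is no real obstacle beyond the bookkeeping in Step 1.
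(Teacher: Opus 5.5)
Your proposal is correct and follows essentially the same argument as the paper: the three-case comparison of a proper divisor $M<H'$ with $H$, the specialisation $M=H'/p$, and the monotonicity of $(j+1)/j$. Your extra justification fills in a step the paper attributes directly to consecutivity: if $d(m)>d(H)$ for some $m\in(H,H')$, then the least such $m$ would be a strict record between $H$ and $H'$.
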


\begin{proof}
A proper divisor $M$ is smaller than $H'$.  If $M<H$, strict recordhood gives $d(M)<d(H)$; if $M=H$, equality holds; and if $H<M<H'$, consecutivity gives $d(M)\le d(H)$.  Taking $M=H'/p$ yields
\[
 d(H')\frac{b_p}{b_p+1}\le d(H),
\]
which is \eqref{eq:record-jump-bound}.  Since $(j+1)/j$ decreases with $j$, every insertion layer ending at $j\le b_p$ has multiplier at least $(b_p+1)/b_p\ge J$.
\end{proof}

\subsection{Exact two-deletion reduction}

Assume now that $L_-(H,H')=2$.  Let
\begin{equation}
 g=\gcd(H,H'),
 \qquad
 C=\frac{H'}g,
 \qquad
 \delta=\frac{d(g)}{d(H)}.
 \label{eq:l2-gcd-data}
\end{equation}
Then $H/g$ is a product of two primes counted with multiplicity.  The target-only factor $C$ stores all required insertions.

\begin{definition}[Target repair function]
\label{def:target-repair}
For a prime $s$ occurring as the base of a required deletion layer, define
\begin{equation}
 \boxed{
 R_s(C;g):=
 \max_{\substack{Q\mid C\\Q\le C/s}}
 \Gamma_g(Q),
 \qquad
 \Gamma_g(Q):=\frac{d(gQ)}{d(g)}.}
 \label{eq:target-repair-function}
\end{equation}
 Writing $H/g=rs$, where $r$ denotes the other deleted prime factor (possibly $r=s$), the inequality $H=grs<H'=gC$ gives $C/s>r\ge1$.  Hence $Q=1$ is always feasible and the set over which the maximum is taken is nonempty.
 The divisor $Q$ represents the target layers inserted before the deletion at $s$; the inequality $Q\le C/s$ is precisely the ceiling constraint at that moment.
\end{definition}

\begin{theorem}[Exact two-deletion reduction]
\label{thm:l2-reduction}
Let the two required deletion layers have prime bases $r,s$ and old exponent levels $j_r,j_s$.  Consider an admissible order $r\mid s$, meaning that the $r$-layer is deleted first and the $s$-layer second.  Put
\begin{equation}
 \alpha_r=\frac{j_r}{j_r+1}.
 \label{eq:first-deletion-factor}
\end{equation}
The optimal normalised capacity among geodesics with this deletion order is
\begin{equation}
 \boxed{
 \beta_{r\mid s}=
 \min\left\{\alpha_r,\ \delta R_s(C;g)\right\}.}
 \label{eq:l2-order-capacity}
\end{equation}
If the deletions lie in distinct prime coordinates, both orders are admissible and
\begin{equation}
 \boxed{
 \betageo(H,H')=
 \max\{\beta_{r\mid s},\beta_{s\mid r}\}.}
 \label{eq:l2-full-capacity}
\end{equation}
If both deletions occur in one coordinate, only the top-layer-first order is admissible.
\end{theorem}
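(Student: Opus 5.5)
We plan to work directly with the structure forced by \Cref{cor:tunnelling}. With $L_-=2$ and $L_+\ge1$, the transition is mixed, so the first move is a deletion and the last an insertion. With the order $r\mid s$ fixed, every geodesic splits into four segments: the deletion of the $r$-layer (first move), a block of insertions forming some divisor $Q\mid C$, the deletion of the $s$-layer, and the remaining insertions forming $C/Q$. After the first move the state is $H/r=gs$. The states along the first insertion block are $gsQ'$ with $Q'\mid Q$, the state just after the second deletion is $gQ$, and the final states are $gQQ''$ with $Q''\mid C/Q$.

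\textbf{Step 1: the lower bound.} We will evaluate divisor counts on each segment.
\begin{itemize}
\item The first post-deletion state has count $\alpha_r d(H)$.
\item On the first insertion block, insertions only increase $d$, so all counts are at least $d(gs)=\alpha_r d(H)$. This uses $d(H/r)=\alpha_r d(H)$ and monotonicity of $d$ under multiplication by a prime in the box.
\item The state $gQ$ has count $\delta\,\Gamma_g(Q)\,d(H)$.
\item Afterwards counts increase again, so the final block states have counts at least $d(gQ)$.
\end{itemize}
Hence the capacity of this path equals $\min\{\alpha_r,\delta\Gamma_g(Q)\}\,d(H)$, since the minimum on each monotone block sits at its start.

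\textbf{Step 2: admissibility.} States in the last block divide $H'$, so they are automatically at most the ceiling. States $gsQ'$ with $Q'\mid Q$ are bounded by $gsQ$, so the whole first block is admissible iff $gsQ\le H'=gC$, i.e. $Q\le C/s$. The initial state $H/r$ is below $H$. Thus admissible geodesics of order $r\mid s$ correspond exactly to divisors $Q\mid C$ with $Q\le C/s$. Maximising over them gives $\beta_{r\mid s}=\min\{\alpha_r,\delta R_s(C;g)\}$, using that $\alpha_r$ does not depend on $Q$.

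We must also confirm that insertion moves commute freely inside a block, so that any ordering of the divisor $Q$ is realisable. This holds because the moves are directed and the states are just the divisors. One subtlety is that in the same-coordinate case the second deletion layer has exponent $j_s=j_r-1$. The formula still holds because $d(gs)$ is computed from $H/r$, and $d(g)$ already accounts for both layers.

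\textbf{Step 3: combining the orders.} Every geodesic has a well-defined deletion order. When $r\ne s$ are distinct coordinates, both orders occur, and $\betageo$ is the maximum of $\beta_{r\mid s}$ and $\beta_{s\mid r}$. When both deletions lie in one coordinate, the directed geodesic must delete the top layer $j_r$ first, so only that order exists.

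\textbf{Main obstacle.} The main obstacle is Step 2's claim that the intermediate states in the first block are controlled exactly by $Q\le C/s$. Concretely, the maximal state $gsQ$ must be the binding constraint, and no hidden constraint may arise elsewhere. The degenerate case $L_+=0$ is impossible here since $H<H'$ forces $C>1$. We must also make sure that $\delta\Gamma_g(Q)$ correctly normalises $d(gQ)$, which it does by definition.
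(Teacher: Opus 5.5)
Your proposal is correct and follows essentially the same route as the paper: tunnelling forces the $r$-deletion to be the first move, the repair factor $Q\mid C$ inserted before the second deletion is constrained exactly by $gsQ\le gC$, and monotonicity within the insertion blocks makes the bottleneck $\min\{\alpha_r,\delta\Gamma_g(Q)\}$, which you then maximise over $Q$ and over the admissible deletion orders. You also check explicitly that the states after the second deletion divide $H'$ and that every $Q\mid C$ can be built by a stack-compatible insertion block; these are small clarifications that the paper leaves implicit.
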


\begin{proof}
Fix the order $r\mid s$.  Since $H/g$ consists of exactly the two deleted prime factors, counting multiplicity,
\[
 H=grs,
 \qquad
 H'=gC.
\]
By tunnelling, the first move is a deletion, so the state becomes $H/r=gs$.  Before the second deletion the path may insert a stack-compatible partial target factor $Q\mid C$.  The resulting state $gsQ$ satisfies the ceiling precisely when
\[
 gsQ\le gC
 \quad\Longleftrightarrow\quad
 Q\le C/s.
\]
All intermediate states during the construction of $Q$ are smaller than $gsQ$ and hence admissible.  Immediately after the first deletion the normalised divisor level is $\alpha_r$.  Immediately after the second deletion the state is $gQ$, with normalised divisor level
\[
 \frac{d(gQ)}{d(H)}=\delta\Gamma_g(Q).
\]
Between the deletions, and after the second deletion, all moves are insertions and therefore increase the divisor count.  Thus the bottleneck for a fixed $Q$ is exactly
\[
 \min\{\alpha_r,\delta\Gamma_g(Q)\}.
\]
Maximising over feasible $Q$ gives \eqref{eq:l2-order-capacity}.  Maximising over admissible deletion orders gives \eqref{eq:l2-full-capacity}.
\end{proof}

\begin{corollary}[Exact half criterion for two deletions]
\label{cor:l2-half-criterion}
A half-safe two-deletion geodesic exists if and only if at least one admissible order $r\mid s$ satisfies
\begin{equation}
 \boxed{R_s(C;g)\ge\frac{1}{2\delta}.}
 \label{eq:l2-half-criterion}
\end{equation}
\end{corollary}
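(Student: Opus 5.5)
The plan is to read the criterion off \Cref{thm:l2-reduction}. By \eqref{eq:l2-full-capacity}, a half-safe geodesic exists if and only if $\betageo(H,H')\ge1/2$. This holds exactly when some admissible order $r\mid s$ has $\beta_{r\mid s}\ge1/2$, because the maximum of finitely many admissible-order capacities is attained. In the single-coordinate case only one order is admissible, and the same statement holds trivially. By \eqref{eq:l2-order-capacity}, the inequality $\beta_{r\mid s}\ge1/2$ is equivalent to the two conditions $\alpha_r\ge1/2$ and $\delta R_s(C;g)\ge1/2$.

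Next I show that the condition $\alpha_r\ge1/2$ imposes nothing. Here $\alpha_r=j_r/(j_r+1)$, where $j_r\ge1$ is the old exponent level of the layer deleted first. The function $j\mapsto j/(j+1)$ is increasing, and its value at $j=1$ is $1/2$, so $\alpha_r\ge1/2$ always. Hence $\beta_{r\mid s}\ge1/2$ reduces to $\delta R_s(C;g)\ge1/2$. Since $\delta=d(g)/d(H)>0$, this is equivalent to $R_s(C;g)\ge1/(2\delta)$, which is \eqref{eq:l2-half-criterion}.

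The only point needing care is the phrase ``admissible order'' and the bookkeeping of which prime is $s$. The criterion quantifies over admissible orders, and for each one the second-deleted prime $s$ enters $R_s$ while the first-deleted prime enters only through $\alpha_r$. We have just seen that $\alpha_r$ is harmless. The same-coordinate case needs one check: there the top layer is deleted first, so $j_r$ is the larger level and $s=r$ as primes, and the argument above is unchanged. I expect no real obstacle. The step most worth stating explicitly is $j_r\ge1$, which holds because a deletion layer starts at a positive exponent.
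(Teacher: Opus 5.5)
Your proposal is correct and follows the paper's own route. You reduce via \eqref{eq:l2-full-capacity} and \eqref{eq:l2-order-capacity} to $\min\{\alpha_r,\delta R_s(C;g)\}\ge 1/2$ for some admissible order, then note that $\alpha_r=j_r/(j_r+1)\ge 1/2$ never obstructs, which leaves exactly $R_s(C;g)\ge 1/(2\delta)$; your treatment of the maximum over orders, the single-coordinate case, and $\delta>0$ just spells out steps the paper's two-line proof leaves implicit.
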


\begin{proof}
Every deletion multiplier $j/(j+1)$ is at least $1/2$, so the first term in \eqref{eq:l2-order-capacity} cannot obstruct the half bound.  The second term gives exactly \eqref{eq:l2-half-criterion}.
\end{proof}

The formula separates two issues.  The first deletion fixes an unavoidable local loss.  The function $R_s$ measures how much target entropy can be loaded before the second deletion without crossing the ceiling.  No other feature of the path matters.

\begin{lemma}[Hard two-deletion patterns]
\label{lem:hard-l2-patterns}
Let $j_1,j_2\ge1$ be the old exponent levels of the two deletions.  Then
\begin{equation}
 \delta=\frac{j_1}{j_1+1}\frac{j_2}{j_2+1}.
 \label{eq:l2-delta-levels}
\end{equation}
If $\delta<1/2$, then either one level equals $1$, or
\begin{equation}
 (j_1,j_2)=(2,2)
 \label{eq:hard-22}
\end{equation}
up to order.
\end{lemma}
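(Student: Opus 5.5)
The proof has two parts: first verify the identity \eqref{eq:l2-delta-levels} from the primewise formula in \Cref{lem:deletion-overlap}, then do a short monotonicity case split.

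\textbf{The identity.} By \Cref{lem:deletion-overlap},
\[
 \delta=\frac{d(g)}{d(H)}=\prod_{a_p>b_p}\frac{b_p+1}{a_p+1}.
\]
Since $L_-=2$, either two distinct coordinates decrease by one each, or a single coordinate decreases by two.

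In the first case, the deleted layers at the two primes go from $j_1\to j_1-1$ and $j_2\to j_2-1$ with $j_i=a_{p_i}$. Their factors are $j_1/(j_1+1)$ and $j_2/(j_2+1)$, which is the claimed product.

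In the second case, $a_p=b_p+2$, and the two layers are $a_p\to a_p-1$ and $a_p-1\to a_p-2$. These have old levels $j_1=a_p$ and $j_2=a_p-1$. The factor then telescopes:
\[
 \frac{b_p+1}{a_p+1}=\frac{a_p-1}{a_p+1}=\frac{a_p}{a_p+1}\cdot\frac{a_p-1}{a_p}.
\]
This is again the product in \eqref{eq:l2-delta-levels}.

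The only point needing care is the convention that the "old exponent level" of a layer means the exponent immediately before that layer is removed. This matches the factor $j_r/(j_r+1)$ used in \Cref{thm:l2-reduction}. With that convention, $j_2=a_p-1\ge1$ holds automatically in the same-coordinate case.

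\textbf{The case split.} Put $f(j)=j/(j+1)$, which is strictly increasing in $j\ge1$. Assume $\delta<1/2$ and that neither level equals $1$, so $j_1,j_2\ge2$.

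If some level is at least $3$, say $j_1\ge3$, then monotonicity gives
\[
 \delta\ge f(3)f(2)=\tfrac34\cdot\tfrac23=\tfrac12,
\]
which is a contradiction. Hence $j_1=j_2=2$, and indeed $\delta=4/9<1/2$ in that case.

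I do not expect a genuine obstacle. The only subtlety is the same-coordinate bookkeeping above; in that case the pattern $(2,2)$ cannot occur, since there $j_1=j_2+1$.
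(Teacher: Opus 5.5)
Your proof is correct and follows the paper's argument: the paper likewise assumes $2\le j_1\le j_2$ and notes that $j_2\ge3$ forces $\delta\ge\frac23\cdot\frac34=\frac12$. The paper treats the identity for $\delta$ as evident, so your derivation from \Cref{lem:deletion-overlap}, including the telescoping in the same-coordinate case, fills in a step the paper leaves implicit.
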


\begin{proof}
Assume $2\le j_1\le j_2$.  If $j_2\ge3$, then
\[
 \delta\ge\frac23\frac34=\frac12.
\]
Hence strict failure of the gcd half certificate with both levels at least two forces $j_1=j_2=2$.
\end{proof}

Thus the genuinely difficult two-deletion cases are sharply constrained before the insertion primes are examined: they contain a support-level deletion $1\to0$, except for the isolated pattern $(2,2)$.

\section{Half-capacity certificates}
\label{sec:half-certificates}

The exact reduction becomes especially transparent when one target insertion lies at or to the right of the second deletion prime.  Such a layer can be postponed to create enough multiplicative room for the deletion while sacrificing at most a factor two in divisor gain.

\subsection{A universal crossing certificate}

\begin{theorem}[Prime-crossing half certificate]
\label{thm:prime-crossing}
Assume $L_-(H,H')=2$ and fix an admissible deletion order $r\mid s$.  If at least one target-only insertion has prime base $q\ge s$, then
\begin{equation}
 \boxed{\betageo(H,H')\ge\frac12.}
 \label{eq:prime-crossing-half}
\end{equation}
If the first deletion is a support deletion $r^1\to r^0$, then
\begin{equation}
 \boxed{\betageo(H,H')=\frac12.}
 \label{eq:prime-crossing-equality}
\end{equation}
\end{theorem}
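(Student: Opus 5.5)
The plan is to apply the exact two-deletion reduction of \Cref{thm:l2-reduction} to the fixed admissible order $r\mid s$, with a single explicit choice of the repair divisor $Q$. Let $q\ge s$ be the prime base of a target-only insertion, so $q\mid C$. Since $s$ is a decreasing coordinate and $q$ an increasing one, $q\ne s$, and in fact $q>s$. I would take
\[
 Q:=C/q,
\]
which means inserting every target layer except the top $q$-layer, and postponing that layer until after the second deletion. This is a divisor of $C$, and $Q=C/q\le C/s$, so $Q$ is feasible in the definition of $R_s(C;g)$. Hence $R_s(C;g)\ge\Gamma_g(C/q)$.

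Next I would evaluate the second term of \eqref{eq:l2-order-capacity}. Since $gC=H'$,
\[
 \delta\,\Gamma_g(C/q)=\frac{d(gC/q)}{d(H)}=\frac{d(H'/q)}{d(H)}
 =J\cdot\frac{b_q}{b_q+1},
\]
where $J=d(H')/d(H)>1$ and $b_q\ge1$ is the exponent of $q$ in $H'$. Hence this term is strictly larger than $\tfrac12$. The first term $\alpha_r=j_r/(j_r+1)$ is at least $\tfrac12$ because $j_r\ge1$. By \eqref{eq:l2-order-capacity}, $\beta_{r\mid s}\ge\tfrac12$. Since $\betageo(H,H')$ is the maximum over admissible orders (or equals $\beta_{r\mid s}$ in the single-coordinate case), this gives \eqref{eq:prime-crossing-half}. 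Equivalently, the criterion of \Cref{cor:l2-half-criterion} is met by this $Q$.

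For the equality clause, suppose the first deletion is $r^1\to r^0$. Then the prime $r$ disappears from the support, so $a_r>0=b_r$. \Cref{prop:mandatory-deletion-upper}, in the form \eqref{eq:support-loss-upper}, gives $\betageo(H,H')\le\tfrac12$ for every order. Combined with the lower bound this yields equality. (Directly, $\alpha_r=\tfrac12$ already forces $\beta_{r\mid s}=\tfrac12$, but the upper bound is also needed to control the other order.)

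The only step that needs care, and the one I expect to be the main obstacle, is confirming that $Q=C/q$ is realised by an admissible path. This is already built into the proof of \Cref{thm:l2-reduction}: all insertions forming $Q$ before the second deletion give states bounded by $gsQ\le gC=H'$, and the remaining insertion of the top $q$-layer comes last and reaches $H'$. So no separate admissibility check beyond $Q\le C/s$ should be required.
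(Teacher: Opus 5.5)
Your proposal is correct and follows the paper's proof essentially step for step. The paper also postpones the top $q$-layer and takes $Q=C/q\le C/s$; it then bounds $\delta\Gamma_g(Q)=\delta\Gamma_g(C)/\eta>\tfrac12$ using that every insertion multiplier $\eta$ is at most $2$ (your explicit $\eta=(b_q+1)/b_q$), and obtains equality from \Cref{prop:mandatory-deletion-upper}, which your remark about controlling the other deletion order correctly identifies as the needed ingredient.
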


\begin{proof}
Postpone the top target-only layer at $q$ and put $Q=C/q$.  Then $Q\mid C$ and, because $q\ge s$,
\[
 Q=\frac Cq\le\frac Cs.
\]
Thus $Q$ is feasible in \eqref{eq:target-repair-function}.  Let $\eta$ be the divisor multiplier of the postponed layer.  Since every insertion multiplier is at most $2$,
\[
 \Gamma_g(Q)=\frac{\Gamma_g(C)}{\eta}
 \ge\frac12\Gamma_g(C).
\]
Moreover,
\[
 \delta\Gamma_g(C)=\frac{d(H')}{d(H)}>1.
\]
Hence $\delta\Gamma_g(Q)>1/2$, and \Cref{cor:l2-half-criterion} proves the lower bound.  If the first deletion is $1\to0$, \Cref{prop:mandatory-deletion-upper} gives the matching upper bound.
\end{proof}

The condition is purely positional.  It does not depend on the exponent level of the crossing insertion, because its entropy multiplier is always between $1$ and $2$.

\subsection{Water filling on the source profile}

One may force a crossing without inspecting the target factorisation.  The source profile itself limits the entropy that can be packed entirely to the left of a deletion threshold.

\begin{definition}[Left packing functional]
\label{def:left-packing}
Let $H=\prod_p p^{a_p}$, let $t$ be prime, and let $m\ge0$.  Define
\begin{equation}
 \boxed{
 G_H(t,m):=
 \max_{\substack{x_q\in\N_0,\ q<t\\\sum_{q<t}x_q=m}}
 \prod_{q<t}\frac{a_q+x_q+1}{a_q+1}.}
 \label{eq:left-packing-functional}
\end{equation}
If $t=2$ and $m>0$, the feasible set is empty and we put $G_H(2,m)=0$.  The functional ignores target monotonicity and the multiplicative size of the packet; it is therefore an upper bound for the divisor gain of any $m$ insertions supported on primes below $t$.
\end{definition}

\begin{lemma}[Discrete water filling]
\label{lem:water-filling}
Assume that $t>2$ or $m=0$.  Put $c_q=a_q+1$.  A maximiser in \eqref{eq:left-packing-functional} is obtained by repeating $m$ times: increment a coordinate whose current load $c_q+x_q$ is minimal.  Equivalently, the final loads that receive increments are levelled as far as their lower bounds allow.
\end{lemma}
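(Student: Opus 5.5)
The plan is to treat the objective in \eqref{eq:left-packing-functional} as a separable product and prove optimality of the greedy rule by an exchange argument. Write the objective as $\prod_{q<t}(c_q+x_q)/c_q$. The denominators are fixed, so the task is to maximise $\prod_{q<t}(c_q+x_q)$, or equivalently $\sum_{q<t}\log(c_q+x_q)$, subject to $x_q\in\N_0$ and $\sum x_q=m$. The hypothesis $t>2$ or $m=0$ ensures that the feasible set is nonempty: either there is at least one prime $q<t$, or $m=0$ and the empty allocation is feasible. The index set is finite, so a maximiser exists.

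The key observation is that each summand $\phi(y)=\log y$ is strictly concave on the integers. Consequently the marginal gain of one more increment at coordinate $q$, namely $\log\bigl((y+1)/y\bigr)$ with current load $y=c_q+x_q$, is strictly decreasing in $y$. The greedy step therefore always chooses an increment of largest marginal gain, since it takes the coordinate of minimal current load. I will then argue by the standard exchange lemma for separable concave resource allocation. Let $x$ be any optimal allocation and $x^{(k)}$ the greedy allocation after $k$ steps. By induction on $k$, I will show that there is an optimal allocation $x^\ast$ with $x^\ast\ge x^{(k)}$ coordinatewise.

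For the inductive step, suppose greedy increments coordinate $u$ at step $k+1$ but $x^\ast_u=x^{(k)}_u$. Since $\sum x^\ast=m>k$, there is some $v$ with $x^\ast_v>x^{(k)}_v$. Moving one unit from $v$ to $u$ in $x^\ast$ changes the objective by
\[
 \log\frac{c_u+x^{(k)}_u+1}{c_u+x^{(k)}_u}-\log\frac{c_v+x^\ast_v}{c_v+x^\ast_v-1}.
\]
This is nonnegative, because $c_v+x^\ast_v-1\ge c_v+x^{(k)}_v\ge c_u+x^{(k)}_u$ by the greedy choice, and $y\mapsto\log((y+1)/y)$ is decreasing. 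The modified allocation is therefore still optimal and dominates $x^{(k+1)}$. At $k=m$ both allocations have total $m$, so they coincide, and the greedy allocation is optimal.

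For the ``levelling'' reformulation, I will note that greedy incrementation of the minimal load produces final loads of a specific form. There is a water level $\lambda$ such that every coordinate with $c_q<\lambda$ is raised to $\lambda$ or $\lambda+1$ (the loads that receive increments differ by at most one among themselves). Coordinates with $c_q\ge\lambda$ receive nothing, and these are exactly the lower bounds $c_q$ being binding. I do not expect a real obstacle here. The only care needed is in the exchange inequality, namely checking that the chosen donor $v$ has its last unit at a load at least that of $u$. Ties in the greedy choice are harmless because the argument works for any minimal-load choice.
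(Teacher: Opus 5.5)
Your proof is correct, but it takes a different route from the paper's. The paper first uses a pairwise transfer: moving an increment from a load $y_q$ to a load $y_r\le y_q-2$ multiplies the product by $1+(y_q-y_r-1)/(y_qy_r)>1$. This shows that no maximiser has an avoidable gap of two. The paper then proves optimality by a global counting argument. The logarithm of the objective of any feasible allocation is a sum of $m$ marginal gains $\log\frac{y+1}{y}$ that form an initial segment in each coordinate. These gains decrease strictly within each coordinate, so the $m$ largest gains in the whole multiset automatically form initial segments. Their sum is therefore both an upper bound and attained, and the greedy rule selects exactly such a set.

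You instead run an inductive exchange (``greedy stays ahead'') argument. You maintain an optimal $x^\ast\ge x^{(k)}$ and repair it by a single unit transfer from a donor $v$ whose last unit sits at load at least $c_u+x^{(k)}_u$. Your version handles ties explicitly, needs no multiset bookkeeping, and shows the slightly stronger fact that every greedy prefix extends to a maximiser. The paper's version instead gives a closed form for the optimum (the sum of the $m$ largest marginal gains). Its transfer step also gives the levelling property for every maximiser, not only for the greedy output.

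One small correction to your levelling sentence. Take $\lambda$ to be the final minimum load. The accurate statement is that coordinates with $c_q\le\lambda$ end at $\lambda$ or $\lambda+1$, and those with $c_q>\lambda$ are untouched. Your version (``$c_q\ge\lambda$ receive nothing'') fails under ties. For example, with $c=(1,2)$ and $m=2$ the greedy rule may end at loads $(2,3)$, and no choice of $\lambda$ fits your description.
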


\begin{proof}
The denominators in \eqref{eq:left-packing-functional} are fixed, so it suffices to maximise $\prod_{q<t}(c_q+x_q)$.  Suppose a candidate has $x_q>0$ and final loads $y_q=c_q+x_q$, $y_r=c_r+x_r$ with $y_q\ge y_r+2$.  Moving one increment from $q$ to $r$ multiplies the relevant numerator product by
\[
 \frac{(y_q-1)(y_r+1)}{y_qy_r}
 =1+\frac{y_q-y_r-1}{y_qy_r}>1.
\]
 Thus no maximiser contains an avoidable gap of at least two.  For sufficiency, the marginal increase in the logarithm of the objective when a coordinate of current load $y$ is incremented is
\[
 \log\frac{y+1}{y},
\]
which decreases strictly with $y$.  Thus incrementing a smallest current load chooses a largest available marginal gain.  In each coordinate the successive marginal gains form a decreasing sequence, so the $m$ largest gains over all coordinates automatically form initial segments of those sequences and hence define a feasible allocation.  The greedy rule selects exactly these gains, up to harmless ties, and is therefore globally optimal.
\end{proof}

\begin{theorem}[Source-profile packing certificate]
\label{thm:source-packing}
Let $H<H'$ be consecutive highly composite numbers with $L_-(H,H')=2$.  Put
\[
 g=\gcd(H,H'),
 \qquad
 \delta=\frac{d(g)}{d(H)},
 \qquad
 m=L_+(H,H').
\]
If the deletion layers lie in distinct coordinates, let $t$ be the smaller prime base; if they lie in one coordinate, let $t$ be that base.  If
\begin{equation}
 \boxed{\delta G_H(t,m)\le1,}
 \label{eq:source-packing-condition}
\end{equation}
then a required insertion has prime base $q\ge t$, and therefore
\begin{equation}
 \boxed{\betageo(H,H')\ge\frac12.}
 \label{eq:source-packing-half}
\end{equation}
\end{theorem}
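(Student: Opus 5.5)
We argue by contradiction and then appeal to the crossing certificate, \Cref{thm:prime-crossing}. Suppose every required insertion has prime base $q<t$. Then $C=H'/g$ is supported on primes below $t$ and contains exactly $m=L_+(H,H')$ insertion layers. Let $x_q=b_q-\min(a_q,b_q)$ be the number of layers inserted at each prime $q<t$. Since every insertion coordinate satisfies $b_q>a_q$, we have $g_q=a_q$ there. Hence
\[
 \Gamma_g(C)=\frac{d(gC)}{d(g)}=\prod_{q<t}\frac{a_q+x_q+1}{a_q+1},\qquad \sum_{q<t}x_q=m .
\]
This product is one of the competitors in \eqref{eq:left-packing-functional}, so $\Gamma_g(C)\le G_H(t,m)$. (If $t=2$ and $m>0$, no such allocation exists; the contradiction is then immediate, because some insertion has base $\ge2=t$.) On the other hand, $\delta\Gamma_g(C)=d(H')/d(H)>1$, exactly as in the proof of \Cref{thm:prime-crossing}. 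Combining the two gives $\delta G_H(t,m)>1$, which contradicts \eqref{eq:source-packing-condition}. So some required insertion has base $q\ge t$.

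It remains to convert this into the half bound. Here the choice of $t$ must match the hypothesis of \Cref{thm:prime-crossing}, which asks for $q\ge s$, where $s$ is the \emph{second} deletion in an admissible order $r\mid s$. If the deletions lie in distinct coordinates, both orders are admissible by \Cref{thm:l2-reduction}. We therefore choose the order in which the larger prime is deleted first and $s=t$, the smaller base, is deleted second. Then $q\ge t=s$ and the crossing theorem applies. If both deletions lie in one coordinate, then $r=s=t$, the only admissible order is top-layer-first, and again $q\ge s$. In either case \Cref{thm:prime-crossing} yields $\betageo(H,H')\ge\frac12$.

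The main obstacle is the identification $\Gamma_g(C)\le G_H(t,m)$. We must check that the base exponents in $\Gamma_g$ are the source exponents $a_q$ and not the gcd exponents. This holds because an insertion coordinate cannot also be a deletion coordinate, so $\min(a_q,b_q)=a_q$ there. For coordinates with no insertion, $x_q=0$ contributes the factor $1$. No monotonicity or size constraint on the allocation is needed, since $G_H$ is a relaxation and we only need it as an upper bound.
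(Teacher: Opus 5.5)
Your proof is correct and matches the paper's argument. Assuming no insertion at a prime $q\ge t$, the telescoped identity $d(H')/d(H)=\delta\,\Gamma_g(C)\le\delta G_H(t,m)\le1$ contradicts recordhood, and ordering the deletions so that the $t$-layer is removed second lets \Cref{thm:prime-crossing} give the half bound; your checks that insertion coordinates carry the source exponents $a_q$, and of the degenerate case $t=2$, only make explicit steps the paper leaves implicit.
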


\begin{proof}
Assume that every required insertion has prime base below $t$.  If $x_q=b_q-a_q$ on increasing coordinates, then $\sum_{q<t}x_q=m$, and telescoping the insertion multipliers gives
\[
 \frac{d(H')}{d(H)}
 =\delta\prod_{q<t}\frac{a_q+x_q+1}{a_q+1}
 \le\delta G_H(t,m)\le1.
\]
This contradicts $d(H')>d(H)$.  Hence a crossing insertion $q\ge t$ exists.  Choose the deletion order that leaves the $t$-layer second; \Cref{thm:prime-crossing} applies.
\end{proof}

The theorem replaces target inspection by a finite concave allocation problem on the source exponents.  It is often possible to avoid even the water-filling calculation.

\begin{corollary}[Coarse entropy-budget tests]
\label{cor:coarse-packing}
Assume the hypotheses of \Cref{thm:source-packing} and $\delta<1/2$.

\begin{enumerate}[label=\textup{(\roman*)}]
\item Suppose one deletion is a support loss $P^1\to P^0$ and the other is $t^A\to t^{A-1}$.  If
\begin{equation}
 \frac{A}{2(A+1)}
 \left(\frac{A+2}{A+1}\right)^m\le1,
 \label{eq:coarse-support-test}
\end{equation}
then crossing is forced.

\item In the exceptional non-support pattern $(2,2)$, if
\begin{equation}
 \frac49\left(\frac43\right)^m\le1,
 \label{eq:coarse-22-test}
\end{equation}
then crossing is forced.
\end{enumerate}
\end{corollary}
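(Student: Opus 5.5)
The plan is to verify condition \eqref{eq:source-packing-condition}, $\delta G_H(t,m)\le1$, in each case and then invoke \Cref{thm:source-packing}. The first task is therefore to bound $G_H(t,m)$ from above using only the source exponents. The key input is \Cref{lem:basic-hcn-shape}: the exponents of $H$ are non-increasing. Hence every prime $q<t$ has $a_q\ge a_t$, so its initial load is $c_q=a_q+1\ge a_t+1$.

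In the discrete water-filling of \Cref{lem:water-filling}, each increment multiplies the product by $(y+1)/y$, where $y$ is the current load. Loads only grow, and every starting load is at least $a_t+1$. So each of the $m$ increments contributes a factor at most $(a_t+2)/(a_t+1)$, which gives
\[
 G_H(t,m)\le\left(\frac{a_t+2}{a_t+1}\right)^m .
\]
This crude bound needs no levelling computation. It only requires that the set of primes below $t$ be nonempty, i.e.\ $t>2$. If $t=2$ and $m>0$, then $G_H=0$ and the condition holds trivially. The case $m=0$ is also trivial, since then $G_H=1$ and $\delta<1/2\le1$.

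For part (i), I first identify $t$ and $\delta$.
\begin{itemize}
\item By \eqref{eq:l2-delta-levels}, $\delta=\tfrac12\cdot\tfrac{A}{A+1}$, because the support loss contributes the factor $1/2$ and the layer $t^A\to t^{A-1}$ contributes $A/(A+1)$.
\item The support-loss prime $P$ has $a_P=1\le a_t=A$. If $P\ne t$, monotonicity forces $P>t$ or an equality of exponents. If $A\ge2$, then $t<P$, so $t$ is the smaller base and $a_t=A$. If $A=1$, then whichever of the two primes is smaller carries exponent $1=A$. Either way the smaller base has exponent $A$, so the bound above applies with $a_t=A$.
\item If the two deletions lie in one coordinate, that coordinate drops from $a$ to $a-2$ with the upper layer at level $a$. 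The pattern "$P^1\to P^0$ together with $t^A\to t^{A-1}$" in one coordinate means $a=2$, so $A=2$ and $t=P$. The mismatch here is that the upper layer has level $A=2$ while $a_t=2$, which is consistent.
\end{itemize}
Substituting, $\delta G_H(t,m)\le \frac{A}{2(A+1)}\bigl(\frac{A+2}{A+1}\bigr)^m$, which is at most $1$ by hypothesis \eqref{eq:coarse-support-test}.

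For part (ii), the pattern $(2,2)$ gives $\delta=4/9$ by \eqref{eq:l2-delta-levels}.
\begin{itemize}
\item If the two deletions lie in distinct coordinates, both have $a=2$. By monotonicity the smaller base $t$ has $a_t=2$, since $a_t\ge2$ and an exponent of $3$ at $t$ would not be the level-$2$ top layer being deleted.
\item A single coordinate with two level-$2$ layers is impossible, since the two layers would have different levels. So this case does not arise.
\end{itemize}
The bound then gives $\delta G_H\le\frac49(\frac43)^m\le1$.

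The main obstacle is the bookkeeping in the first step of each part: confirming that the exponent of the smaller deleted base is exactly the stated level, and correctly separating the "distinct coordinates" case from the "one coordinate" case. The level of a deleted layer is its old exponent $a_t$ only when the coordinate loses exactly one layer, and this must be checked in each configuration. Once that is settled, the monotone-exponent bound on the water-filling gain finishes both parts.
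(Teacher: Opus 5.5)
Your proposal is correct and is essentially the paper's argument: source monotonicity gives $a_q\ge A$ (respectively $a_q\ge 2$) for every prime $q<t$, so each insertion contributes a factor at most $(A+2)/(A+1)$ (respectively $4/3$), and combining this with $\delta=A/[2(A+1)]$ (respectively $4/9$) turns the coarse test into $\delta G_H(t,m)\le 1$, whence \Cref{thm:source-packing} forces crossing. Your extra bookkeeping only makes explicit what the paper's proof leaves implicit, namely which deletion base plays the role of $t$ and that its source exponent equals the stated level, including the $A=1$ and single-coordinate configurations.
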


\begin{proof}
Under failure of crossing, all insertion primes satisfy $q<t$.  In case (i), source monotonicity gives $a_q\ge A$ for such primes, so every insertion multiplier is at most $(A+2)/(A+1)$.  Since $\delta=A/[2(A+1)]$, \eqref{eq:coarse-support-test} implies \eqref{eq:source-packing-condition}.  In case (ii), $a_q\ge2$, every insertion multiplier is at most $4/3$, and $\delta=4/9$.
\end{proof}

\subsection{A universal small-prime support-loss theorem}

A simple exchange against the largest support prime controls the exponent profile near the left edge.

\begin{lemma}[Support-prime exponent floor]
\label{lem:support-prime-floor}
Let $H$ be highly composite, let $P$ be its largest prime factor and assume $P$ occurs to exponent one.  If $q<P$ has exponent $a_q$, then
\begin{equation}
 \boxed{P\le q^{a_q+1}.}
 \label{eq:support-prime-floor}
\end{equation}
\end{lemma}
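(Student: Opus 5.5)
The plan is a single exchange argument against the largest support prime, in the same style as the proof of \Cref{lem:basic-hcn-shape}. Fix $q<P$ with exponent $a_q$ in $H$. Suppose, for contradiction, that $P>q^{a_q+1}$, and consider the competitor
\[
 M:=\frac{H\,q^{a_q+1}}{P}.
\]
Since $P$ occurs to exponent one, $M$ is an integer, and $P\nmid M$. Because $q<P$, the $q$-exponent of $M$ is $a_q+(a_q+1)=2a_q+1$, while every other exponent is unchanged.

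Next we compare sizes and divisor counts. The assumption $P>q^{a_q+1}$ gives $M<H$ directly. For the divisor count, removing $P^1$ multiplies $d$ by $1/2$, and raising the $q$-exponent from $a_q$ to $2a_q+1$ multiplies it by $(2a_q+2)/(a_q+1)=2$. Hence $d(M)=d(H)$. So $M<H$ has the same number of divisors as $H$, which contradicts the strict record property \eqref{eq:hcn-definition-intro}. This forces $P\le q^{a_q+1}$. Equality is impossible for distinct primes, but the lemma only asks for the non-strict bound.

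The one point needing care is that $a_q=0$ should be harmless. Here it is excluded anyway: by \Cref{lem:basic-hcn-shape} the support is an initial segment, so $a_q\ge1$ for every prime $q<P$. Even without that, the exchange $P\to q$ gives $M=Hq/P$ with $d(M)=d(H)$, so the argument still goes through. I do not expect a real obstacle. The only choice to get right is the exponent increment $a_q+1$, which is exactly the amount that doubles the factor $a_q+1$ and so compensates the halving caused by deleting $P$.
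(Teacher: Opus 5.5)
Your proposal is correct and is essentially the paper's own proof: the same competitor $M=Hq^{a_q+1}/P<H$, with the halving from deleting $P^1$ exactly offset by raising the $q$-exponent from $a_q$ to $2a_q+1$, so that $d(M)=d(H)$ contradicts strict recordhood. Your side remarks, that $a_q=0$ would be harmless and that equality $P=q^{a_q+1}$ cannot occur, are correct but not needed.
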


\begin{proof}
If $q^{a_q+1}<P$, then
\[
 M=H\frac{q^{a_q+1}}P<H.
\]
Removing $P^1$ halves the divisor count, while increasing the $q$-exponent from $a_q$ to $2a_q+1$ doubles it.  Thus $d(M)=d(H)$, contradicting strict recordhood.
\end{proof}

\begin{theorem}[Small-prime support-loss crossing]
\label{thm:small-prime-crossing}
Let $H<H'$ be consecutive highly composite numbers with $L_-=2$.  Suppose one deletion removes the largest support prime $P$ through $P^1\to P^0$, and the other removes one layer at a prime $t\le3$.  Then a target insertion with prime base $q\ge t$ is unavoidable, and
\begin{equation}
 \boxed{\betageo(H,H')=\frac12.}
 \label{eq:small-prime-equality}
\end{equation}
\end{theorem}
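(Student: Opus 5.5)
The plan is to first force the crossing insertion and then invoke \Cref{thm:prime-crossing} with the deletion order $P\mid t$, in which the support deletion $P^1\to P^0$ comes first and the $t$-layer second. Once a target insertion with base $q\ge t$ is available, \Cref{thm:prime-crossing} gives $\betageo(H,H')\ge\frac12$. Because the first deletion in that order is a support deletion, it also gives equality; equivalently, \Cref{prop:mandatory-deletion-upper} supplies the upper bound $\frac12$ from the vanishing prime $P$. So everything reduces to showing that some required insertion has prime base $\ge t$. Note that $P\ge5$, since $P$ is distinct from $t\le3$ and is the largest support prime.

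If $t=2$ this is automatic. Since $H<H'$ and there are deletions, $L_+\ge1$, and every insertion base satisfies $q\ge2=t$. The real case is $t=3$. Here I would argue by contradiction, assuming every required insertion is at the prime $2$, so that
\[
 H'=\frac{H\,2^m}{3P},\qquad m=L_+,\qquad b_2=a_2+m .
\]

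The first step is a lower bound on $m$ from the jump $d(H')>d(H)$. Telescoping the multipliers gives
\[
 \frac{d(H')}{d(H)}=\frac12\cdot\frac{a_3}{a_3+1}\cdot\frac{a_2+m+1}{a_2+1}>1 .
\]
This forces $a_2+m+1>2(a_2+1)$, that is, $m\ge a_2+2$.

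The second step, which I expect to be the crux, is an exchange argument contradicting the record property of $H'$. \Cref{lem:support-prime-floor}, applied to $H$ with $q=2$, gives $P\le 2^{a_2+1}$. This is strict, because $P$ is an odd prime. Put $j=a_2+1$ and
\[
 M=H'\,\frac{P}{2^{j}} .
\]
This is an integer because $b_2=a_2+m\ge j$. Moreover $M<H'$, since $P<2^{j}$.

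Since $P\nmid H'$, reinserting $P$ doubles the divisor count, while lowering the $2$-exponent from $b_2$ to $b_2-j$ multiplies it by $(b_2-j+1)/(b_2+1)$. Hence
\[
 \frac{d(M)}{d(H')}=\frac{2(b_2+1-j)}{b_2+1}\ge1
 \iff b_2+1\ge 2j \iff m\ge a_2+1 .
\]
The first step guarantees $m\ge a_2+1$, so $d(M)\ge d(H')$ with $M<H'$. This contradicts the strict recordhood of $H'$. Therefore some insertion lies at a prime $q\ge3=t$, and the conclusion follows from \Cref{thm:prime-crossing}.

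The delicate points to double-check are the strictness of the floor $P<2^{a_2+1}$, which uses $P$ odd, and the integrality of $M$. Both hold because $m\ge a_2+2$ exceeds $j$.
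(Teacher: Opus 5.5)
Your proof is correct. For $t=2$ it matches the paper, but for $t=3$ it takes a genuinely different route.

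\emph{Paper's route.} The paper argues from above. Since $d(2H)>d(H)$, the next record satisfies $H'\le 2H$, so $2^m\le 6P$. Combined with $P\le 2^{a_2+1}$ from \Cref{lem:support-prime-floor}, this caps $m\le a_2+3$. The telescoped jump is then at most $\frac{A}{A+1}\frac{a_2+2}{a_2+1}\le\frac{A(A+2)}{(A+1)^2}<1$, using source monotonicity $a_2\ge A=a_3$. This contradicts $d(H')>d(H)$.

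\emph{Your route.} You argue from below. The jump $d(H')>d(H)$ forces $m\ge a_2+2$. You then run the support-prime exchange a second time, now against $H'$, with the explicit competitor $M=H'P/2^{a_2+1}$, which equals $2^{m-a_2-1}H/3$. The contradiction falls on the strict recordhood of $H'$ rather than on the jump inequality.

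\emph{Comparison.} Your version needs neither the bound $H'\le 2H$ nor monotonicity. It uses the $3$-layer deletion only through the fact that its multiplier is below $1$. The paper's version keeps the contradiction in the same divisor-budget form as the source-packing tests of \Cref{cor:coarse-packing}. Both finish identically via \Cref{thm:prime-crossing} with the order $P\mid t$ and the support-deletion upper bound.

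\emph{Minor clean-ups.} Your claim $P\ge5$ is justified only for $t=3$; for $t=2$ your reasoning gives only $P\ge3$, though nothing depends on it there. In your closing sentence, the strictness $P<2^{a_2+1}$ comes from $P$ being odd, and the integrality of $M$ needs only $m\ge1$; neither rests on $m\ge a_2+2$. Moreover, $m\ge a_2+2$ gives $d(M)>d(H')$ strictly, so the non-strict bound $M\le H'$ would already suffice.
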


\begin{proof}
For $t=2$, any nonempty insertion set automatically contains a prime $q\ge2$.  Since two deletions lower the divisor count and $H'$ is a new record, the insertion set is nonempty.

Let $t=3$, let the deleted layer be $3^A\to3^{A-1}$, and write $a=a_2$.  If crossing failed, all $m$ insertions would occur at $2$, so
\[
 \frac{H'}H=\frac{2^m}{3P}.
\]
Because $d(2H)>d(H)$, the next record satisfies $H'\le2H$, and hence $2^m\le6P$.  By \Cref{lem:support-prime-floor}, $P\le2^{a+1}$, so
\[
 2^m\le6\,2^{a+1}<2^{a+4},
 \qquad m\le a+3.
\]
The divisor jump would then obey
\begin{align*}
 \frac{d(H')}{d(H)}
 &=\frac{A}{2(A+1)}\frac{a+m+1}{a+1}\\
 &\le\frac{A}{A+1}\frac{a+2}{a+1}
 \le\frac{A(A+2)}{(A+1)^2}<1,
\end{align*}
where $a\ge A$ follows from source monotonicity.  This contradicts recordhood.  Crossing is forced, \Cref{thm:prime-crossing} gives the lower bound $1/2$, and the support deletion gives the upper bound.
\end{proof}

\subsection{Matching deletions with compensating insertions}

The same mechanism extends beyond two deletions.  Once an exponent-one support layer has imposed the factor $1/2$, every further deletion can be neutralised by an insertion whose layer level is no higher.

\begin{theorem}[Compensated deletion matching]
\label{thm:compensated-matching}
Let
\[
 H=\prod_p p^{a_p}<H'=\prod_p p^{b_p}
\]
be consecutive highly composite numbers.  Suppose exactly one source support prime disappears: for some prime $r$,
\[
 a_r=1,
 \qquad
 b_r=0,
\]
and no prime $p\ne r$ satisfies $a_p>0=b_p$.  Delete the layer $(r,1)$ first.  List all remaining required deletion layers as
\[
 D_i:(p_i,A_i),\qquad A_i\to A_i-1,
 \qquad 1\le i\le k,
\]
in a stack-admissible order; explicitly, if $p_i=p_h$ and $i<h$, then $A_i>A_h$.  Match them injectively to distinct required insertion layers
\[
 I_i:(q_i,j_i),\qquad j_i-1\to j_i.
\]
Set $x_0=H/r$.  At stage $i$, put $e_i=\vpp{q_i}{x_{i-1}}$.  Require $e_i<j_i$, insert successively the still missing prerequisite layers
\[
 (q_i,e_i+1),\ldots,(q_i,j_i-1)
\]
followed by $I_i=(q_i,j_i)$, and denote the resulting state by
\[
 y_i=x_{i-1}q_i^{j_i-e_i}.
\]
Then execute $D_i$ and put $x_i=y_i/p_i$.  Suppose, for every $i$, that
\begin{enumerate}[label=\textup{(\alph*)}]
\item none of the prerequisite layers is one of the later matched layers $I_{i+1},\ldots,I_k$;
\item $y_i\le H$;
\item $j_i\le A_i$.
\end{enumerate}
After stage $k$, execute all remaining required insertion layers in any stack-admissible order.  Then this schedule is an $H'$-admissible geodesic and
\begin{equation}
 \boxed{\betageo(H,H')=\frac12.}
 \label{eq:matching-equality}
\end{equation}
If $q_i>p_i$, condition (c) follows automatically from the non-increasing target exponent profile.
\end{theorem}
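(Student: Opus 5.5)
Upper bound first: since the prime $r$ disappears from the support, \Cref{prop:mandatory-deletion-upper} (or \Cref{cor:vanishing-prime-obstruction}) gives $\betageo(H,H')\le1/2$. So the whole task is to show that the explicit schedule is an $H'$-admissible directed geodesic whose states all have at least $d(H)/2$ divisors.

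\emph{Geodesic property.} Every move changes one coordinate by one unit in the direction of its target. Each required layer is executed exactly once, because the matching is injective and condition (a) keeps the prerequisite insertions from duplicating later matched layers $I_h$. The stack orders are respected: deletions in one coordinate run top-down by the stated ordering, and insertions in one coordinate run bottom-up because we always insert from $e_i+1$ to $j_i$. Hence the schedule is a directed geodesic, and its states stay in $\Bcal(H,H')$.

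\emph{Admissibility.} The states up to $y_i$ are obtained from $x_{i-1}$ by multiplying, so they are bounded by $y_i\le H<H'$ by (b). The state $x_i=y_i/p_i$ is also below $H$, and so is the first state $H/r$. After stage $k$, only insertions remain, so every state divides $H'$ and is therefore at most $H'$.

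\emph{Divisor bound.} The quantity to track is $D(z)/d(H)$. I will prove by induction that $d(x_i)\ge d(H)/2$ for every $i$, and more precisely that $d(x_i)\ge d(x_0)=d(H)/2$.
\begin{itemize}
\item Insertion moves only increase $d$, so all states between $x_{i-1}$ and $y_i$ have at least $d(x_{i-1})$ divisors.
\item Write $f=v_{p_i}(y_i)$. Deleting $D_i$ multiplies $d$ by $f/(f+1)$.
\item I claim $f=A_i$. The deletions in coordinate $p_i$ occur top-down. Insertions never touch a coordinate that still has deletions pending, since a coordinate is either increasing or decreasing. So at stage $i$ the current $p_i$-exponent is exactly $A_i$.
\item The layer $I_i$ multiplied $d$ by $(j_i+1)/j_i$. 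The prerequisite insertions only add further gain.
\item By (c), $(j_i+1)/j_i\ge(A_i+1)/A_i$, so the net change $\frac{j_i+1}{j_i}\cdot\frac{A_i}{A_i+1}\ge1$. Hence $d(x_i)\ge d(x_{i-1})$.
\end{itemize}
The remaining insertions after stage $k$ increase $d$. So the bottleneck is exactly $d(H)/2$, attained at $H/r$. Together with the upper bound, this gives \eqref{eq:matching-equality}.

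\emph{Final remark of the statement.} If $q_i>p_i$, then $j_i\le b_{q_i}\le b_{p_i}$ by the non-increasing target profile (\Cref{lem:basic-hcn-shape} applied to $H'$). Also $b_{p_i}\le A_i-1<A_i$, since layer $A_i$ of $p_i$ is deleted. This gives (c).

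\emph{Main obstacle.} The delicate point is the precise bookkeeping that the $p_i$-exponent at the moment of deletion equals $A_i$ and that insertions never disturb it. It relies on increasing and decreasing coordinates being disjoint, which holds because each coordinate moves monotonically toward its target. I also need $e_i<j_i$ so that $I_i$ is genuinely inserted at stage $i$. That is assumed, and (a) prevents it from having already been consumed.
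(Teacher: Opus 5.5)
Your proposal is correct and follows essentially the same route as the paper's proof. Both take the upper bound from \Cref{prop:mandatory-deletion-upper}, bound every state up to $y_i$ by (b), show $d(x_i)\ge d(x_{i-1})$ from the factor $\frac{j_i+1}{j_i}\cdot\frac{A_i}{A_i+1}\ge1$ using (c), note that the tail states divide $H'$, and prove (c) for $q_i>p_i$ via $j_i\le b_{q_i}\le b_{p_i}\le A_i-1$. You also spell out two points the paper only asserts: that each required layer is executed exactly once, and that the $p_i$-exponent equals $A_i$ when $D_i$ is executed.
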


\begin{proof}
The first deletion changes the $r$-exponent from $1$ to $0$, and therefore
\[
 d(x_0)=\frac12d(H).
\]
The deletion order is stack-admissible, and an insertion coordinate cannot be a deletion coordinate; hence $D_i$ is available when it is called.  At stage $i$, all prerequisite insertions increase both the integer and its divisor count.  They end at the state immediately preceding $I_i$, while $y_i$ is the state immediately after $I_i$.  Consequently every state in this insertion block is at most $y_i\le H$.  Also $x_i=y_i/p_i<y_i$, so the state following $D_i$ respects the same ceiling.

Let $u_i$ be the state immediately before $I_i$.  The prerequisite insertions give $d(u_i)\ge d(x_{i-1})$.  Since $I_i$ raises level $j_i-1$ to $j_i$ and $D_i$ lowers level $A_i$ to $A_i-1$,
\[
 d(x_i)
 =d(u_i)\frac{j_i+1}{j_i}\frac{A_i}{A_i+1}
 \ge d(u_i)
 \ge d(x_{i-1}),
\]
where the first inequality is condition~\textup{(c)}.  Induction therefore shows that every state through the last deletion has at least $d(H)/2$ divisors.

Once all deletions have been performed, the current state divides $H'$.  Every subsequent state obtained by inserting a remaining required layer is again a divisor of $H'$, hence is at most $H'$, and its divisor count can only increase.  The completed schedule is thus an $H'$-admissible geodesic with capacity at least $1/2$.  The deletion $r^1\to r^0$ gives the reverse inequality by \Cref{prop:mandatory-deletion-upper}, proving equality.

If $q_i>p_i$, target monotonicity gives
\[
 j_i\le b_{q_i}\le b_{p_i}\le A_i-1<A_i.
\]
Thus condition~\textup{(c)} is automatic.
\end{proof}

The theorem isolates a clean combinatorial target: pair every non-support deletion with a sufficiently low target insertion that fits before it.  In the exact range through $10^{70}$, all $124$ support-loss transitions admit such matchings; this is reported as a finite verification in \Cref{sec:computation}.

The repair-floor and frontier methods give further arithmetic certificates, but are not needed for the logical development of the fixed-endpoint maximin problem.  Their statements and the four explicit second-layer frontier bounds are collected in \Cref{sec:repair-frontier}.

\section{Prime layers and release scheduling}
\label{sec:scheduling}

The previous criteria treat deletions, insertions, and arithmetic competitors directly.  A complementary viewpoint resolves every exponent into unit layers.  It is the most transparent place where the vocabulary of energy and entropy enters: logarithmic prime mass is the constrained resource, while logarithmic divisor count is the quantity whose valley we seek to make shallow.

\subsection{The exact layer spectrum}

For a prime $p$ and an integer $j\ge1$, the layer $(p,j)$ raises the $p$-exponent from $j-1$ to $j$.  Assign to it the size weight, divisor-entropy value, and local slope
\begin{equation}
 w_{p,j}:=\log p,
 \qquad
 s_{p,j}:=\log\frac{j+1}{j},
 \qquad
 \rho_{p,j}:=\frac{s_{p,j}}{w_{p,j}}.
 \label{eq:layer-data}
\end{equation}
For $n=\prod_p p^{a_p}$, let
\begin{equation}
 \Lambda(n):=\{(p,j):1\le j\le a_p\}.
 \label{eq:layer-set}
\end{equation}
Layers in a fixed prime coordinate form a stack: insertions must proceed upward in $j$, and deletions downward.

\begin{proposition}[Exact layer decomposition]
\label{prop:layer-decomposition}
For every $n\ge1$,
\begin{equation}
 \boxed{
 \log n=\sum_{(p,j)\in\Lambda(n)}w_{p,j},
 \qquad
 \log d(n)=\sum_{(p,j)\in\Lambda(n)}s_{p,j}.}
 \label{eq:layer-decomposition}
\end{equation}
Relative to $g=\gcd(m,n)$, a geodesic from $m$ to $n$ deletes exactly the layers in $\Lambda(m)\setminus\Lambda(g)$ and inserts exactly the layers in $\Lambda(n)\setminus\Lambda(g)$.
\end{proposition}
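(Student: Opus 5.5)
The plan is to verify both identities in \eqref{eq:layer-decomposition} coordinate by coordinate, and then read off the geodesic statement from the stack structure of the layers.

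\textbf{Size identity.} Fix a prime $p$ with $a_p=\vpp pn$. The layers of $\Lambda(n)$ in the $p$-coordinate are $(p,1),\ldots,(p,a_p)$, each with weight $w_{p,j}=\log p$. Their total weight is therefore $a_p\log p$. Summing over $p$ recovers $\log n=\sum_p a_p\log p$.

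\textbf{Divisor identity.} In the same coordinate the entropy values telescope:
\[
 \sum_{j=1}^{a_p}\log\frac{j+1}{j}=\log(a_p+1).
\]
Summing over $p$ gives $\sum_p\log(a_p+1)=\log d(n)$, by \eqref{eq:prime-profile}. Since only finitely many $a_p$ are nonzero, all sums are finite.

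\textbf{Layers deleted and inserted.} Let $m=\prod p^{a_p}$ and $n=\prod p^{b_p}$, and put $c_p=\min(a_p,b_p)=\vpp pg$. A directed geodesic changes each coordinate monotonically, so the $p$-coordinate moves from $a_p$ to $b_p$ one unit at a time.
\begin{enumerate}[label=\textup{(\roman*)}]
\item If $a_p>b_p$, the steps $j\to j-1$ for $j=a_p,\ldots,b_p+1$ remove exactly the layers $(p,j)$ with $c_p<j\le a_p$. These are precisely the $p$-layers in $\Lambda(m)\setminus\Lambda(g)$, and no layer is inserted in this coordinate.
\item If $b_p>a_p$, the steps insert exactly the layers $(p,j)$ with $c_p<j\le b_p$, which form $\Lambda(n)\setminus\Lambda(g)$ in that coordinate, and nothing is deleted.
\item If $a_p=b_p$, the coordinate never moves.
\end{enumerate}
In each case the layer set in coordinate $p$ is determined. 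Taking the union over $p$ gives the claim. The set is independent of which geodesic is chosen; only the interleaving order varies.

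\textbf{Main obstacle.} No step is genuinely hard. The only point requiring care is the convention that deleting a layer $(p,j)$ means the move $j\to j-1$. This matches the move multipliers $j/(j+1)$ and $p^{-1}$ stated in \S\ref{sec:framework}, and it ensures each layer is counted exactly once.
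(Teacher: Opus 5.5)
Your proof is correct and follows essentially the same route as the paper: $\log p$ is counted exactly $a_p$ times, the values $\log\frac{j+1}{j}$ telescope to $\log(a_p+1)$ in each coordinate, and the geodesic claim comes from each coordinate moving monotonically from its source exponent to its target exponent. Your three-case comparison with $c_p=\min(a_p,b_p)$ simply writes out what the paper states in one sentence.
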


\begin{proof}
The first identity repeats $\log p$ exactly $a_p$ times.  The second telescopes in each prime coordinate:
\[
 \sum_{j=1}^{a_p}\log\frac{j+1}{j}=\log(a_p+1).
\]
Summing over $p$ gives the two formulas.  The statement about a geodesic follows because every coordinate moves monotonically from its source exponent to its target exponent.
\end{proof}

The same slopes are the critical spectrum of superior highly composite numbers.  This makes the scheduling formulation compatible with, rather than external to, the classical theory of Ramanujan and Alaoglu--Erd\H{o}s \citep{Ramanujan1915,AlaogluErdos1944}.

\begin{proposition}[Superior-threshold spectrum]
\label{prop:superior-spectrum}
For $\lambda>0$, consider
\begin{equation}
 \Phi_\lambda(n):=\log d(n)-\lambda\log n.
 \label{eq:phi-lambda}
\end{equation}
Away from a critical tie, the exponent in prime direction $p$ of the unique maximiser of $\Phi_\lambda$ is
\begin{equation}
 a_p(\lambda)
 =\#\{j\ge1:\rho_{p,j}>\lambda\}
 =\left\lfloor\frac{1}{p^\lambda-1}\right\rfloor.
 \label{eq:superior-profile}
\end{equation}
At a tie $\lambda=\rho_{p,j}$, the boundary layer $(p,j)$ has zero $\Phi_\lambda$-increment and may be present or absent without changing the objective.
\end{proposition}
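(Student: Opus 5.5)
The plan is to use the layer decomposition of \Cref{prop:layer-decomposition} to make $\Phi_\lambda$ separable over prime coordinates and then to maximise each coordinate on its own.

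First, for $n=\prod_p p^{a_p}$, \eqref{eq:layer-decomposition} gives
\[
 \Phi_\lambda(n)=\sum_p\sum_{j=1}^{a_p}\bigl(s_{p,j}-\lambda w_{p,j}\bigr)
 =\sum_p\sum_{j=1}^{a_p}\log p\,\bigl(\rho_{p,j}-\lambda\bigr).
\]
The objective is therefore a sum of independent one-dimensional functions $\phi_p(a)=\sum_{j\le a}\log p\,(\rho_{p,j}-\lambda)$. Since $\rho_{p,j}=\log((j+1)/j)/\log p$ decreases strictly in $j$, each $\phi_p$ has strictly decreasing increments and so is discrete-concave. Such a function is maximised by including exactly the layers with positive increment: the first $a$ layers with $a=\#\{j:\rho_{p,j}>\lambda\}$. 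Layers with increment zero (the tie case) may be included or omitted freely. Away from ties this maximiser is unique in each coordinate.

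Second, I would check that the maximiser has finite support and that the global maximum exists. For large $p$ the first layer has $\rho_{p,1}=\log2/\log p<\lambda$ once $p>2^{1/\lambda}$, so $a_p(\lambda)=0$ for all but finitely many primes. The global maximiser is then the product of the coordinatewise maximisers. Uniqueness follows because any other profile differs in some coordinate and strictly loses there, while no other coordinate can gain, each already being at its own maximum.

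Third, I would evaluate the count explicitly. The inequality $\rho_{p,j}>\lambda$ reads $\log((j+1)/j)>\lambda\log p$, that is $1+1/j>p^\lambda$, that is $j<1/(p^\lambda-1)$. The set of such $j\ge1$ is $\{1,\dots,\lceil 1/(p^\lambda-1)\rceil-1\}$. Away from ties, $1/(p^\lambda-1)$ is not an integer, since equality $j=1/(p^\lambda-1)$ is exactly the tie $\rho_{p,j}=\lambda$. The count therefore equals $\lfloor 1/(p^\lambda-1)\rfloor$. At a tie $\lambda=\rho_{p,j}$ the layer $(p,j)$ contributes $\log p(\rho_{p,j}-\lambda)=0$, which gives the final assertion.

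The main obstacle is bookkeeping rather than substance: the ceiling/floor conversion at ties, and making precise that ``away from a critical tie'' means $\lambda\ne\rho_{p,j}$ for every $p,j$. I would state that convention explicitly before the third step.
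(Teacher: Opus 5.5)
Your proposal is correct and follows essentially the same route as the paper. The layer decomposition writes $\Phi_\lambda$ as a sum of layer increments $\log\frac{j+1}{j}-\lambda\log p$; strict decrease in $j$ makes the positive layers an initial stack in each coordinate; positivity is then rewritten as $j<1/(p^\lambda-1)$. Your extra bookkeeping only makes explicit what the paper's shorter proof leaves implicit: finite support via $p>2^{1/\lambda}$, uniqueness from strict coordinatewise loss, and the ceiling-to-floor conversion when $1/(p^\lambda-1)$ is not an integer.
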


\begin{proof}
By \Cref{prop:layer-decomposition},
\[
 \Phi_\lambda(n)
 =\sum_{(p,j)\in\Lambda(n)}
 \left(\log\frac{j+1}{j}-\lambda\log p\right).
\]
For a fixed $p$ the summands decrease strictly with $j$, so the positive ones form an initial stack.  Positivity is equivalent to
\[
 1+\frac1j>p^\lambda
 \quad\Longleftrightarrow\quad
 j<\frac{1}{p^\lambda-1}.
\]
Away from equality, the number of positive integral $j$ is the floor in \eqref{eq:superior-profile}.  At equality the boundary summand vanishes.
\end{proof}

\subsection{Prefix resources and a local exchange theorem}

Let a geodesic word be $e_1,\ldots,e_L$, and put $\varepsilon_r=+1$ for an insertion and $\varepsilon_r=-1$ for a deletion.  If $w_r,s_r$ are the data of the traversed layer, then at the $k$th vertex $z_k$ one has the exact prefix identities
\begin{align}
 \log\frac{z_k}{H}
 &=\sum_{r\le k}\varepsilon_r w_r,
 \label{eq:prefix-size}\\
 \log\frac{d(z_k)}{d(H)}
 &=\sum_{r\le k}\varepsilon_r s_r.
 \label{eq:prefix-entropy}
\end{align}
The ceiling condition is therefore
\begin{equation}
 \sum_{r\le k}\varepsilon_r w_r
 \le\log\frac{H'}{H}
 \qquad(0\le k\le L).
 \label{eq:prefix-ceiling}
\end{equation}
Thus the transition is an exact two-coordinate prefix-scheduling problem, with one coordinate constrained from above and the other optimised from below.

\begin{lemma}[Insertion-before-deletion exchange]
\label{lem:exchange}
Suppose a ceiling-admissible geodesic contains two adjacent moves in different prime coordinates: first a required deletion, then a required insertion.  If that insertion was already ceiling-admissible before the deletion, swapping the two moves preserves the endpoint and feasibility and strictly raises the minimum divisor count within the two-step segment.
\end{lemma}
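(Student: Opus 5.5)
Let the adjacent pair be the deletion of layer $(p,A)$, taking $A\to A-1$, followed by the insertion of layer $(q,j)$, taking $j-1\to j$, with $p\ne q$. Write $z$ for the state before the pair. The original segment is
\[
z\;\to\; z/p\;\to\; zq/p ,
\]
and the swapped segment is
\[
z\;\to\; zq\;\to\; zq/p .
\]
The plan is to check, in order, that the swap is legal, that it respects the ceiling, and that it improves the bottleneck.

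First, legality. Since $p\ne q$, the two moves act on different coordinates. Each coordinate therefore still moves monotonically toward its target, and the stack order within each coordinate is unchanged. The $q$-coordinate of $z$ equals $j-1$ whether or not the $p$-layer has been removed, so inserting $(q,j)$ directly at $z$ is a valid stack move. After both moves we reach $zq/p$ in either order, so the endpoint and all later vertices are unchanged, and the word remains a directed geodesic.

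Second, feasibility. In the swapped word the only new vertex is $zq$. The hypothesis that the insertion "was already ceiling-admissible before the deletion" means exactly $zq\le X$. All other vertices are shared with the original admissible path, so the whole path stays admissible.

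Third, the divisor comparison. Write $D=d(z)$. The original segment visits $D$, then $D\frac{A}{A+1}$, then $D\frac{A}{A+1}\frac{j+1}{j}$, and its minimum is $D\frac{A}{A+1}$. The swapped segment visits $D$, then $D\frac{j+1}{j}$, then the same final value. The intermediate vertex now exceeds both $D$ and the final value $D\frac{A}{A+1}\frac{j+1}{j}$, so the swapped minimum is
\[
\min\Bigl\{D,\; D\tfrac{A}{A+1}\tfrac{j+1}{j}\Bigr\}.
\]
Both entries strictly exceed $D\frac{A}{A+1}$: the first because $A/(A+1)<1$, the second because $(j+1)/j>1$. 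This gives the strict increase of the minimum over the two-step segment.

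The only delicate point is interpreting "within the two-step segment". I will take it to mean the three vertices $z$, the middle vertex, and $zq/p$. Since the endpoints coincide, the statement then reduces to the observation above that the middle vertex strictly improves. Because all other vertices of the path are unchanged, this also shows that the global path bottleneck does not decrease.
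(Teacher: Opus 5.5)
Your proposal is correct and is essentially the paper's proof. Both compare the three divisor counts before and after the swap using the multipliers $A/(A+1)<1$ and $(j+1)/j>1$. Both note that the only new vertex, $zq$, is ceiling-admissible by hypothesis, and that distinct coordinates leave stack availability and the endpoint unchanged.
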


\begin{proof}
Let the segment start at $z$, with $d(z)=D$.  Write its deletion and insertion multipliers as $r_-\in(0,1)$ and $r_+>1$.  Before the swap the three divisor counts are
\[
 D,\qquad Dr_-,\qquad Dr_-r_+,
\]
so the local minimum is $Dr_-$.  After the swap they are
\[
 D,\qquad Dr_+,\qquad Dr_+r_-,
\]
and both noninitial values exceed $Dr_-$.  The swapped intermediate integer is admissible by hypothesis; the final integer is unchanged.  Distinct coordinates ensure that stack availability is unaffected.
\end{proof}

\begin{corollary}[Adjacent insertion-eager normal form]
\label{cor:insertion-eager}
Among capacity-maximising geodesics there is one containing no adjacent deletion--insertion pair for which the insertion was already feasible before the deletion.
\end{corollary}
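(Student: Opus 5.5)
The plan is an extremal (potential-function) argument over the finite set of optimal geodesics. Since the exponent box is finite, so is the set of $H'$-admissible geodesics from $H$ to $H'$, and by \Cref{thm:dp} the set of capacity-maximising ones is nonempty. Among these I will choose one, $\gamma$, that minimises a potential counting \emph{inversions}. Concretely, encode $\gamma$ as its move word $e_1,\ldots,e_L$ and set
\[
 \Psi(\gamma)=\#\{(k,l):k<l,\ e_k\text{ a deletion},\ e_l\text{ an insertion}\},
\]
the number of deletion--insertion pairs with the deletion occurring earlier.

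Now suppose $\gamma$ contains an adjacent pair $e_k$ (a deletion) followed by $e_{k+1}$ (an insertion), where the insertion was already feasible before the deletion. Feasibility here means two things: the state $z_{k-1}$ multiplied by the insertion prime is at most $H'$, and the move is stack-available. I apply \Cref{lem:exchange} to swap the two moves. For this I need the two moves to lie in different prime coordinates. This holds automatically: along a directed geodesic each coordinate moves monotonically toward its target, so no coordinate carries both a required deletion and a required insertion. The lemma then says three things. The swapped word is still an $H'$-admissible geodesic with the same endpoints. The divisor counts outside the two-step segment are unchanged, because the states before and after the segment coincide. The minimum inside the segment does not decrease; in fact it strictly increases. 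Hence $\operatorname{cap}$ of the new path is at least $\operatorname{cap}(\gamma)$, and so it is still optimal.

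The swap lowers $\Psi$ by exactly one: the swapped pair is the only pair whose relative order changes. This contradicts the minimality of $\Psi(\gamma)$. So a $\Psi$-minimal optimal geodesic has no such adjacent pair, which proves the corollary. Equivalently, one can run the swaps repeatedly starting from any optimal geodesic; the process terminates because $\Psi$ is a nonnegative integer that strictly decreases at each step.

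The only delicate point, and the one I would check carefully, is the \emph{global} capacity claim. \Cref{lem:exchange} controls only the two-step segment. I must confirm that the other vertices are literally the same states, so that the path minimum is the minimum of the unchanged values and the new segment values, none of which fall below the old minimum. I must also confirm that the ceiling check at the new intermediate vertex $z_{k-1}q$ is exactly the stated feasibility hypothesis. Both follow directly from the prefix identities \eqref{eq:prefix-size}--\eqref{eq:prefix-entropy}. Everything else is bookkeeping.
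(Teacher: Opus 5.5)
Your proposal is correct and follows the paper's proof: among capacity-maximising geodesics, choose one minimising the deletion--insertion inversion count (your $\Psi$, the paper's $\operatorname{Inv}$), swap an offending adjacent pair using \Cref{lem:exchange}, and note that the swap keeps the path optimal while lowering the count by exactly one, a contradiction. You also state two points the paper leaves implicit: the two moves necessarily lie in different coordinates of a directed geodesic, and every vertex outside the swapped segment is unchanged.
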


\begin{proof}
For a geodesic word $\gamma=(e_1,\ldots,e_L)$, define its total deletion--insertion inversion count by
\[
 \operatorname{Inv}(\gamma)
 :=\#\{(u,v):1\le u<v\le L,
 e_u\text{ is a deletion and }e_v\text{ is an insertion}\}.
\]
Choose, among all capacity-maximising geodesics, one with minimal $\operatorname{Inv}(\gamma)$.  If it contained an adjacent deletion--insertion pair of the kind described in the statement, \Cref{lem:exchange} would allow the two moves to be swapped without lowering the bottleneck.  The swapped path would therefore still be capacity-maximising.  Because the two moves are adjacent, all their order relations with every other move are unchanged, while their mutual deletion--insertion inversion is removed.  Thus $\operatorname{Inv}$ decreases by exactly one, a contradiction.
\end{proof}

The corollary is intentionally local.  It does not assert that every feasible insertion must globally precede every deletion; intervening stack constraints and the ceiling can matter.  It does, however, justify searching among schedules that consume readily available repair before releasing more prime mass.

\subsection{A record-specific greedy rule}

The finite data suggest the following deterministic scheduler.
\begin{enumerate}[label=\textup{(G\arabic*)}]
\item If at least one required insertion is ceiling-feasible, perform a feasible insertion of largest current slope $\rho_{p,j}$.
\item If no required insertion is feasible, delete the top required layer at the largest available prime.
\end{enumerate}
An independently proved equality of slopes is resolved lexicographically by the smaller prime $p$ and then the smaller layer index $j$.  In the finite checker, unequal slopes are ordered by outward-rounded interval arithmetic; overlapping enclosures trigger a precision increase rather than being treated as a tie.
The rule is not presented as a theorem.  Its exact agreement with dynamic programming for all record transitions in the computed range is reported in \Cref{comp:greedy}.  The record hypothesis is essential even for very orderly exponent profiles.

\begin{proposition}[A nonrecord counterexample to the greedy rule]
\label{prop:greedy-counterexample}
On the primes $2,3,5,7$, let
\begin{equation}
 a=(5,5,4,1),
 \qquad
 b=(5,4,3,3).
 \label{eq:greedy-counterexample-profiles}
\end{equation}
They represent
\[
 34\,020\,000\longrightarrow111\,132\,000,
 \qquad
 360\longrightarrow480
\]
in divisor count.  The exact geodesic bottleneck is $300$, whereas the rule \textup{(G1)--(G2)} produces bottleneck $288$.
\end{proposition}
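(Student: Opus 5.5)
The plan is a direct finite verification. The box is tiny: two deletion layers, $(3,5)$ and $(5,4)$, and two insertion layers, $(7,2)$ and $(7,3)$. I will compute the exact optimum by hand, using a simple upper bound together with an explicit path, rather than running \Cref{thm:dp} in full. I will then trace the rule (G1)--(G2) step by step.

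First I check the data. From $2^5 3^5 5^4 7=34\,020\,000$ and $2^5 3^4 5^3 7^3=111\,132\,000$ we get divisor counts $6\cdot6\cdot5\cdot2=360$ and $6\cdot5\cdot4\cdot4=480$. The ceiling is $X=111\,132\,000$ and $H'/H=49/15$. The two deletion multipliers are $5/6$ at prime $3$ and $4/5$ at prime $5$. The two insertion multipliers are $3/2$ and $4/3$, both at prime $7$.

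For the upper bound, note that the only possible first insertion is $7^1\to7^2$. It gives $7\cdot34\,020\,000=238\,140\,000>X$, so every admissible geodesic must begin with a deletion. That first deletion leads either to $360\cdot5/6=300$ or to $360\cdot4/5=288$. Hence the bottleneck is at most $300$.

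For the matching lower bound I will exhibit the path that deletes at $3$ first. It passes through $11\,340\,000$ $(300)$, then $79\,380\,000$ $(450)$, then $15\,876\,000$ $(360)$, and ends at $111\,132\,000$ $(480)$. Each state is at most $X$, so the exact bottleneck is $300$. This path does not insert $7$ twice in a row, because $7\cdot79\,380\,000>X$.

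Next I trace the greedy rule. At the start no insertion is feasible, so (G2) deletes the top layer at the largest available prime, which is $5$, giving $6\,804\,000$ with $288$ divisors. Then (G1) inserts $7$ to reach $47\,628\,000$ with $432$ divisors. A second insertion of $7$ would exceed $X$, so the rule deletes at $3$, reaching $15\,876\,000$ with $360$ divisors, and finally inserts $7$. The resulting bottleneck is $288<300$.

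The only delicate point is to make sure (G2) really selects prime $5$. This holds because "largest available prime" among the deletion coordinates $\{3,5\}$ is $5$. No slope comparisons arise, since at each step at most one insertion layer, the next $7$-layer, is available.
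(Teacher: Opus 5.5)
Your proposal is correct and follows the paper's proof essentially step for step. The infeasibility of the initial insertion at $7$ forces a first deletion and caps the bottleneck at $300$, the schedule $/3,\times7,/5,\times7$ attains it, and tracing (G1)--(G2) from the forced deletion at $5$ gives $360\to288\to432\to360\to480$; your extra checks (the explicit integers and the infeasibility of a second consecutive insertion at $7$) only spell out details the paper leaves implicit.
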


\begin{proof}
No insertion at $7$ is initially ceiling-feasible, since
$7\cdot34\,020\,000>111\,132\,000$.  Hence every path begins by deleting at $3$ or at $5$.  Those moves leave respectively $300$ and $288$ divisors, so no path can have bottleneck above $300$.  The schedule
\[
 /3,\ \times7,\ /5,\ \times7
\]
is ceiling-admissible and has divisor counts
\[
 360\to300\to450\to360\to480,
\]
proving optimality.  Rule (G2) instead deletes at the larger prime $5$, and its divisor counts begin
\[
 360\to288\to432\to360\to480.
\]
Thus its bottleneck is $288$.
\end{proof}

An abstract resource-balancing example in \Cref{app:generic-scheduling-counterexample} shows why total resource and total divisor entropy alone cannot imply the half barrier: the special layer spectrum, stack precedence, and the record-box gap are genuine arithmetic input.

\section{Exact computation through \texorpdfstring{$10^{70}$}{10 to the 70}}
\label{sec:computation}

The numerical statements in this section are finite computations with explicitly separated arithmetic.  Integer values, divisor counts, gcds, ceiling tests, and maximin bottlenecks are evaluated exactly with integer arithmetic.  Logarithms enter only in ordering the heuristic slopes of (G1), where interval enclosures certify each comparison.  None of the computations is used as a substitute for an unproved universal implication.

\subsection{Complete record enumeration below a bound}

The record scan does not test every integer.  It enumerates the necessary exponent profiles from \Cref{lem:basic-hcn-shape}.

\begin{lemma}[Completeness of the profile-minimum scan]
\label{lem:enumeration-completeness}
Fix $B\ge1$.  Enumerate every finite sequence
\begin{equation}
 a_1\ge a_2\ge\cdots\ge a_k\ge1,
 \qquad
 \prod_{i=1}^k p_i^{a_i}\le B,
 \label{eq:enumerated-profiles}
\end{equation}
including the empty sequence.  For each divisor count $D=\prod_i(a_i+1)$ retain only the least integer realising $D$.  Sort these retained minima by size and keep precisely the strict successive increases of $D$.  The resulting list is the complete list of highly composite numbers at most $B$.
\end{lemma}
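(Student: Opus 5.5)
The argument proves two inclusions between the output list $\mathcal L$ of the scan and the set of highly composite numbers $H\le B$. The basic tool is \Cref{lem:basic-hcn-shape}, together with the observation that for any $n$ the integer obtained by sorting its exponents in non-increasing order onto the initial primes has the same divisor count and is at most $n$. Writing $n^\ast$ for this rearrangement of $n$, we have $d(n^\ast)=d(n)$ and $n^\ast\le n$. So every divisor count realised by some $n\le B$ is already realised by an enumerated profile of value at most $n$. For each $D$, the least integer $m_D\le B$ with $d(m_D)=D$ is therefore one of the enumerated profiles, and the retained minimum for $D$ is exactly $m_D$.

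First inclusion: every highly composite $H\le B$ lies in $\mathcal L$. By \Cref{lem:basic-hcn-shape}, $H$ has the shape \eqref{eq:enumerated-profiles}, so it is enumerated (the empty sequence covers $H=1$). Strict recordhood gives $d(m)<d(H)$ for all $m<H$. Hence $H$ is the least integer with divisor count $d(H)$, and $H$ is retained. Every other retained minimum $m_{D}<H$ has $D=d(m_D)<d(H)$. Thus, in the list sorted by size, $d(H)$ exceeds every earlier entry, so $H$ is a strict successive increase of $D$.

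Second inclusion: every element of $\mathcal L$ is highly composite. Let $x\in\mathcal L$. Suppose some $m<x$ had $d(m)\ge d(x)$. Then $m\le B$, and the retained minimum $m_{d(m)}\le m<x$ appears earlier in the sorted list with divisor count at least $d(x)$. This contradicts $x$ being a strict increase of $D$ over all earlier retained entries. So $d(m)<d(x)$ for all $m<x$, which is \eqref{eq:hcn-definition-intro}.

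One point needs care: "strict successive increases" must be read as increases over the running maximum of all earlier retained entries, not merely over the immediately preceding entry. I will state this reading explicitly, since with the other reading the second inclusion could fail. The main subtlety is the step that ensures the competitor $m_{d(m)}$ really appears among the retained entries below $x$. This is exactly where the rearrangement argument, $m_{d(m)}\le m^\ast\le m\le B$, is used. With that in place, both inclusions are routine.
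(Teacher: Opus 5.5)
Your proof is correct and follows essentially the paper's route: \Cref{lem:basic-hcn-shape} places every record $H\le B$ among the enumerated profiles, and strict recordhood makes $H$ the least integer with divisor count $d(H)$, so it survives the retention step; the ordered scan then reproduces the record definition. Your rearrangement bound $m^\ast\le m$ and your running-maximum reading of ``strict successive increases'' make explicit what the paper compresses into its last sentence, and the reading does matter: the consecutive retained minima $840<900<960$ have divisor counts $32,27,28$, so comparing only with the immediately preceding entry would wrongly admit $960$.
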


\begin{proof}
By \Cref{lem:basic-hcn-shape}, every highly composite number at most $B$ occurs among the enumerated profiles.  A strict divisor record $H$ is necessarily the least integer with divisor count $d(H)$: an earlier integer with the same count would contradict strictness.  Hence no record is lost when one retains only the least representative of each divisor count.  Finally, scanning those representatives in increasing numerical order and keeping strict increases is exactly the definition of the record subsequence.
\end{proof}

 The archived depth-first C++ scanner in the versioned Zenodo reproducibility package (DOI: \zenododoi) generates \eqref{eq:enumerated-profiles} recursively.  It uses exact $256$-bit integers for $n$, stores the minimum representative of each divisor count, and writes each record together with its exponent vector.  The table used below is generated by that scanner; it is not imported from an external record list.

\begin{computation}[Global finite census]
\label{comp:global-census}
For $B=10^{70}$ the scan visits $723\,533\,871$ non-increasing exponent profiles, retains $1\,891\,853$ distinct divisor-count minima, and extracts $889$ strict records, hence $888$ consecutive transitions.  The principal transition statistics are displayed in \Cref{tab:global-census}.
\end{computation}

\begin{table}[t]
\centering
\caption{Exact transition census through $10^{70}$.  Ratios and counts refer only to this finite range.}
\label{tab:global-census}
\begin{tabularx}{0.92\textwidth}{@{}Xr@{}}
\toprule
Quantity & Exact value\\
\midrule
Enumerated profiles & $723\,533\,871$\\
Distinct divisor-count minima & $1\,891\,853$\\
Highly composite numbers & $889$\\
Consecutive transitions & $888$\\
Transitions with $d(\gcd(H,H'))<d(H)/2$ & $119$\\
Minimum static gcd ratio & $128/405$\\
Transitions with $\betageo<1/2$ & $0$\\
Transitions with $\betageo=1/2$ & $124$\\
Maximum deletion distance $L_-$ & $5$\\
Largest formal geodesic box & $128$ states\\
\bottomrule
\end{tabularx}
\end{table}

\subsection{Static overlap versus dynamic capacity}

For each consecutive pair, the exact recursion \eqref{eq:dp-recursion} is evaluated over its full exponent box.  The contrast with the static gcd surrogate is visible in \Cref{fig:transition-ratios}.  The open circles can fall well below one half; the black crosses do not in the tested range.  The distinct markers keep the two series legible in greyscale.

\begin{figure}[t]
\centering
\includegraphics[width=0.94\textwidth]{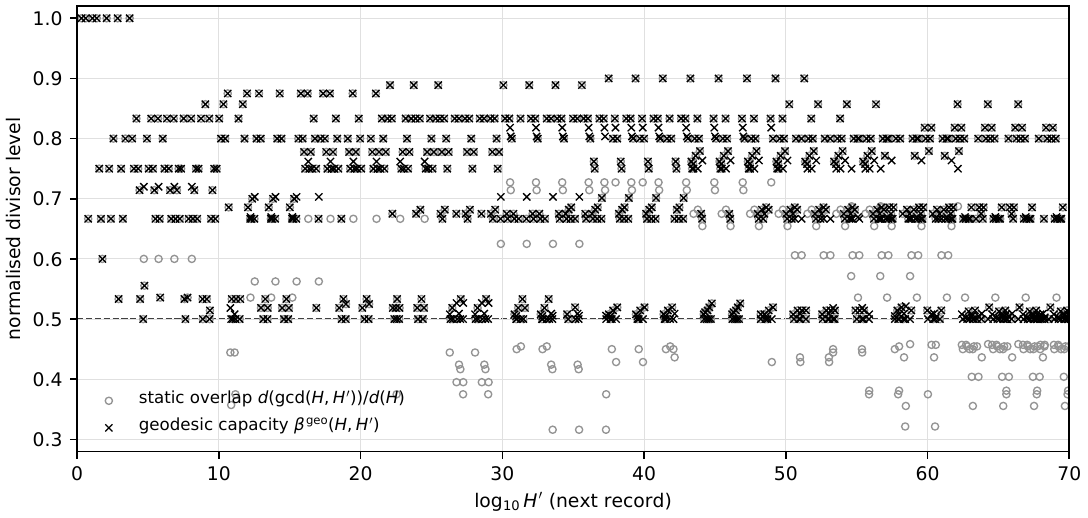}
\caption{Static overlap $d(\gcd(H,H'))/d(H)$ (open circles) and exact dynamic capacity $\betageo(H,H')$ (black crosses) for all $888$ consecutive record transitions through $10^{70}$.  The horizontal coordinate is $\log_{10}H'$.  The dashed line at $1/2$ is a visual reference, not an assumed bound.}
\label{fig:transition-ratios}
\end{figure}

The deletion-distance strata reveal where the half floor is tight.

\begin{table}[t]
\centering
\caption{Exact dynamic capacities stratified by deletion distance.  ``Equality'' counts transitions with capacity $1/2$; the final column is the minimum capacity in that stratum.}
\label{tab:deletion-strata}
\begin{tabular}{@{}rrrr@{}}
\toprule
$L_-$ & Transitions & Equality cases & Minimum $\betageo$\\
\midrule
$0$ & $7$   & $0$  & $1$\\
$1$ & $385$ & $59$ & $1/2$\\
$2$ & $301$ & $61$ & $1/2$\\
$3$ & $151$ & $4$  & $1/2$\\
$4$ & $37$  & $0$  & $1024/2025$\\
$5$ & $7$   & $0$  & $28/55$\\
\bottomrule
\end{tabular}
\end{table}

\begin{computation}[Support loss and equality]
\label{comp:support-equality}
Among the $888$ transitions, the support size changes by $-1,0,+1$ in respectively $124,605,159$ cases.  Exactly the $124$ support-loss transitions have $\betageo=1/2$.  The equality cases split as $59,61,4$ across $L_-=1,2,3$.  Every one admits a compensated matching of \Cref{thm:compensated-matching}; no matching failure occurs.
\end{computation}

This finite equivalence is sharper than the half bound alone.  It also agrees with the unconditional upper obstruction: losing an exponent-one support prime forces the path through a state with at most half the old divisor count.

\paragraph{Independently checked two-deletion certificates.}
The exact reduction of \Cref{thm:l2-reduction} was evaluated independently of the full path recursion.  There were no mismatches, and each of the $65$ gcd-hard transitions received both a prime-crossing and a source-profile packing certificate.  The full classification, hard deletion patterns, and frontier-competition audit are given in \Cref{app:l2-certificate-audit}.

\subsection{Greedy stress test and forced release blocks}

\begin{computation}[Record-specific greedy agreement]
\label{comp:greedy}
For all $888$ transitions, rule \textup{(G1)--(G2)} has exactly the same bottleneck as the dynamic-programming optimum.  There are no mismatches.  The checker made $673$ slope comparisons using arbitrary-precision interval arithmetic, starting at $128$ bits and doubling the precision if two enclosures overlapped.  Every comparison was certified by disjoint enclosures already at $128$ bits; there were no unresolved comparisons or slope equalities.  The smallest point separation between compared slopes was greater than $2.0186\times10^{-4}$.

Along these greedy paths, every maximal consecutive block of forced deletions has length at most $5$, its raw divisor-loss factor is at most $2$, and the state after every such block has at least $d(H)/2$ divisors.  The longest block has five deletions and raw loss $77/40$; the factor $2$ occurs only for a single support deletion.
\end{computation}

The block statistic points to a more local route toward a proof: rather than controlling an arbitrary full schedule, one may try to show that each record-forced release block loses at most one bit of divisor entropy before repair becomes feasible.

\subsection{Reproducibility boundary}

 The arXiv source package contains only files required to compile the article.  A separate reproducibility package is archived at Zenodo as version 1.0.0 (DOI: \zenododoi).  It contains the exact TSV record table, the archived C++ scanner, the full maximin recursion, the interval-certified greedy checker, the figure generator, and separate programs for the record-box structure, exact two-deletion reduction, crossing and source-packing certificates, frontier competition, and compensated support matching.

 The package provides one-command quick, retained-table, and full modes, expected JSON and TSV outputs, execution logs, a pinned Python environment, compiler and interpreter versions, hardware and runtime notes, a licence, and a SHA-256 manifest.  The quick mode rebuilds the record table through $10^{20}$ and runs every checker, the retained-table mode verifies all cited counts from the included $10^{70}$ table without rebuilding it, and the full mode rebuilds that table through $10^{70}$ before rerunning every check.  Integer states, ceiling tests, divisor counts, and bottlenecks use exact arithmetic; only the greedy slope order uses the certified interval procedure described above.

\section{Open problems}
\label{sec:open}

The results above leave a compact set of record-specific questions.  They are stated separately from the theorems and the finite census.

\begin{conjecture}[Geodesic half-entropy conjecture]
\label{conj:geodesic-half}
For every pair $H<H'$ of consecutive highly composite numbers,
\begin{equation}
 \boxed{\betageo(H,H')\ge\frac12.}
 \label{eq:geodesic-half-conjecture}
\end{equation}
Equivalently, there is an $H'$-admissible exponent geodesic along which the divisor entropy drops by at most $\log2$ below $\log d(H)$.
\end{conjecture}

The conjecture is proved here for $L_-\le1$, for every pair satisfying the gcd certificate, and under the crossing, source-packing, or compensated-matching hypotheses above.  The auxiliary repair-floor and frontier results in \Cref{sec:repair-frontier} provide additional arithmetic tests that can force the crossing hypothesis in two-deletion cases.  The exact $L_-=2$ reduction narrows the remaining universal issue to arithmetic control of the divisor-selection functional $R_s(C;g)$.

\begin{conjecture}[Equality classification]
\label{conj:equality-classification}
For consecutive highly composite numbers,
\begin{equation}
 \betageo(H,H')=\frac12
 \quad\Longleftrightarrow\quad
 |\supp(H')|<|\supp(H)|.
 \label{eq:equality-classification}
\end{equation}
In words, equality occurs exactly when at least one exponent-one support prime disappears.
\end{conjecture}

Assuming \Cref{conj:geodesic-half}, the reverse implication in \eqref{eq:equality-classification} follows from the mandatory-deletion upper bound.  The forward implication is subtler: it asks for a strict amount of repair whenever the supports do not shrink.

\begin{conjecture}[Record slope--release scheduler]
\label{conj:greedy}
For consecutive highly composite numbers, rule \textup{(G1)--(G2)} produces a ceiling-admissible geodesic of optimal bottleneck capacity.
\end{conjecture}

The nonrecord example in \Cref{prop:greedy-counterexample} shows that monotone exponent profiles alone are insufficient.  A proof would have to connect the local slope order to record-specific exchange inequalities or to the absence of box states between consecutive records.  A weaker but already decisive target would be to prove that every forced deletion block of this scheduler loses at most a factor $2$ in divisor count before the next insertion becomes feasible.

\section{Conclusion}
\label{sec:conclusion}

The passage between consecutive highly composite numbers has a geometry that is invisible to a static gcd comparison.  The gcd corner can fall below half the old divisor record, yet an interleaved geodesic can repair entropy before the next compulsory deletion.  The complementary involution $z\mapsto HH'/z$ explains why this repair takes place in a sharply constrained region: between consecutive records, the exponent box has no numerical state in the open interval $(H,H')$.  A mixed path must descend below the old record and return only at the new one.

Within that tunnel, the exact maximin recursion provides the ground truth, the two-deletion formula isolates the arithmetic repair functional, and crossing, packing, competitive-repair, frontier, and matching arguments turn local prime information into rigorous lower certificates.  The layer spectrum then reveals the same threshold structure that generates superior highly composite numbers, but now as a scheduling problem with a hard logarithmic-size wall.

The finite census through $10^{70}$ is coherent in a way that is hard to dismiss but easy to state honestly: the static half-gcd conjecture fails $119$ times; the dynamic half barrier does not fail; equality occurs exactly at support loss; and a simple record-specific scheduler reproduces every exact optimum.  The universal theorem remains open.  What the present paper contributes is a tractable state space, several proved mechanisms, an exact two-deletion reduction, and a reproducible set of sharply formulated targets.

\appendix

\section{Anatomy of the first static counterexample}
\label{app:first-counterexample}

For the pair in \eqref{eq:first-counterexample-intro},
\begin{align*}
 H&=2^5 3^3 5^2 7^2 11\,13\,17\,19,\\
 H'&=2^6 3^3 5\,7\,11\,13\,17\,19\,23.
\end{align*}
The gcd and target-only factor are
\begin{equation}
 g=2^5 3^3 5\,7\,11\,13\,17\,19
   =1\,396\,755\,360,
 \qquad
 C=\frac{H'}g=46.
 \label{eq:first-example-gC}
\end{equation}
Since $d(g)=1536$ and $d(H)=3456$,
\begin{equation}
 \delta=\frac{d(g)}{d(H)}=\frac49.
 \label{eq:first-example-delta}
\end{equation}
The two deletion layers are $5^2\to5^1$ and $7^2\to7^1$, each with first-deletion factor $2/3$.  If the $5$-layer is removed first, the repair available before the remaining $7$-deletion is restricted by
\[
 Q\mid46,
 \qquad
 Q\le\frac{46}{7}.
\]
The optimal choice is $Q=2$, which raises the $2$-exponent from $5$ to $6$ and gives gain $7/6$.  Hence \Cref{thm:l2-reduction} yields
\begin{equation}
 \betageo(H,H')
 =\min\left\{\frac23,\frac49\cdot\frac76\right\}
 =\frac{14}{27}.
 \label{eq:first-example-capacity}
\end{equation}
The same value is obtained in the opposite deletion order.  One optimal path is
\[
 H\xrightarrow{/7}\frac H7
 \xrightarrow{\times2}\frac{2H}{7}
 \xrightarrow{/5}\frac{2H}{35}
 \xrightarrow{\times23}H',
\]
with divisor counts
\[
 3456\to2304\to2688\to1792\to3584.
\]
This example is the smallest place where the distinction between overlap and schedule becomes unavoidable: the gcd ratio is $4/9$, while the geodesic capacity is $14/27$.

\section{Pseudocode for the exact bottleneck recursion}
\label{app:dp-pseudocode}

For completeness, \Cref{alg:dp} records the finite calculation behind \Cref{thm:dp}.  States are processed in increasing $\ell^1$ distance from the source, so every predecessor has already been evaluated.

\begin{figure}[H]
\begin{tcolorbox}[colback=softgray,colframe=deepblue!70,boxrule=0.7pt,arc=1.2mm]
\small\ttfamily
Input: source exponents a, target exponents b, ceiling X\par
$\delta_i\gets|b_i-a_i|$; $F\gets$ empty map\par
for $r$ in exponent box, ordered by $|r|_1$:\par
\quad compute exact $N(r)$ and $D(r)$\par
\quad if $N(r)>X$: continue\par
\quad if $r=0$: $F(r)\gets D(r)$; continue\par
\quad $M\gets0$\par
\quad for each $i$ with $r_i>0$ and $r-e_i$ in $F$:\par
\qquad $M\gets\max\{M,F(r-e_i)\}$\par
\quad if $M>0$: $F(r)\gets\min\{D(r),M\}$\par
return $F(\delta)$
\end{tcolorbox}
\caption{Exact widest-path dynamic program on the directed exponent box.}
\label{alg:dp}
\end{figure}

Storing, together with $F(r)$, a predecessor that attains the maximum reconstructs an optimal path.  All comparisons in this recursion are integer comparisons.  The normalised capacity is formed only after the exact bottleneck has been obtained.

\section{Auxiliary repair-floor and frontier certificates}
\label{sec:repair-frontier}

The packing functional fixes the number of insertions and maximises their possible divisor gain.  There is a useful dual question: how cheaply, in multiplicative size, can a prescribed divisor deficit be repaired?  This formulation removes $L_+$ and compares the hypothetical target directly with an alternative record-breaking packet.

\subsection{Repair floors}

\begin{definition}[Restricted repair floor]
\label{def:repair-floor}
Let $H<H'$ be consecutive highly composite numbers and set $g=\gcd(H,H')$.  For a set of primes $\mathcal P$, define
\begin{equation}
 \boxed{
 \mathfrak L_{H,g}(\mathcal P):=
 \min\left\{Q\ge1:
 \supp(Q)\subseteq\mathcal P,\ d(gQ)>d(H)\right\}.}
 \label{eq:repair-floor}
\end{equation}
If no such $Q$ exists, set the value to $+\infty$.  For a prime threshold $t$, write
\begin{equation}
 \mathfrak L_{<t}(H;g):=
 \mathfrak L_{H,g}(\{q\text{ prime}:q<t\}).
 \label{eq:left-repair-floor}
\end{equation}
\end{definition}

\begin{theorem}[Competitive-repair crossing principle]
\label{thm:competitive-repair}
Write $H'=gC$.  Fix a prime threshold $t$ and suppose that, under a hypothetical no-crossing transition, the target-only factor $C$ would be supported on primes $q<t$.  If there exists an integer $D$ such that
\begin{equation}
 d(gD)>d(H)
 \qquad\text{and}\qquad
 D<\mathfrak L_{<t}(H;g),
 \label{eq:competitive-repair-condition}
\end{equation}
then the no-crossing hypothesis is impossible.  Hence the actual target contains an insertion at some prime $q\ge t$.
\end{theorem}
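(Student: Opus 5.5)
The argument will be by contradiction, comparing the hypothetical target $H'$ with the cheaper record-breaking competitor $gD$. Suppose the no-crossing hypothesis holds, so that $C=H'/g$ is supported on primes $q<t$. The first step is to note that $C$ is itself a repair packet in the sense of \Cref{def:repair-floor}. Its support lies in $\{q\text{ prime}:q<t\}$, and $d(gC)=d(H')>d(H)$ because $H'$ is a new strict record. By the definition of the minimum in \eqref{eq:left-repair-floor}, this gives
\[
 \mathfrak L_{<t}(H;g)\le C.
\]

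The second step brings in the competitor. The hypothesis \eqref{eq:competitive-repair-condition} supplies an integer $D$ with $D<\mathfrak L_{<t}(H;g)\le C$ and $d(gD)>d(H)$. Multiplying by $g$ gives $gD<gC=H'$. So $gD$ is an integer smaller than $H'$ whose divisor count exceeds $d(H)$.

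The third step derives the contradiction from consecutivity, exactly as in the proof of \Cref{prop:predecessor-barrier}. Every integer $M<H'$ satisfies $d(M)\le d(H)$: if $M<H$ strict recordhood gives $d(M)<d(H)$, if $M=H$ there is equality, and if $H<M<H'$ consecutivity of the records gives $d(M)\le d(H)$. Applying this to $M=gD$ contradicts $d(gD)>d(H)$. Hence $C$ cannot be supported entirely on primes below $t$, and some prime $q\ge t$ divides $C$. Such a prime is the base of a required target-only insertion layer, since $C=H'/g$ collects exactly the inserted layers by \Cref{prop:layer-decomposition}.

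I do not expect a real obstacle. The only point needing care is that $\mathfrak L_{<t}(H;g)$ is finite under the no-crossing hypothesis; this is automatic because $C$ is an admissible candidate. Note also that $D$ need not be supported below $t$ or divide anything; only its size and divisor count are used.
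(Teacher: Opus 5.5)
Your proposal is correct and matches the paper's proof essentially step for step. Under no crossing, $C$ is admissible in the minimum defining $\mathfrak L_{<t}(H;g)$, so $D<\mathfrak L_{<t}(H;g)\le C$ gives $gD<H'$ with $d(gD)>d(H)$, which contradicts either the record property of $H$ or the consecutivity of $H'$.
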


\begin{proof}
Under the no-crossing hypothesis, $C$ is admissible in the minimisation defining $\mathfrak L_{<t}(H;g)$, because $d(gC)=d(H')>d(H)$.  Therefore
\[
 C\ge\mathfrak L_{<t}(H;g)>D.
\]
It follows that $gD<H'=gC$, while $d(gD)>d(H)$.  If $gD\le H$, this contradicts the record property of $H$; if $H<gD<H'$, it contradicts the consecutivity of $H'$.  Thus the target cannot be supported entirely below $t$.
\end{proof}

The competitor $D$ need not be a target state or even a state on a geodesic.  It serves as an arithmetic witness that a left-only repair would be too expensive to define the next record.  This freedom is particularly effective when $D$ is built at an exponent frontier.

\subsection{Prime-exponent frontiers}

\begin{definition}[Prime-exponent frontier]
\label{def:frontier}
For a highly composite number $H=\prod_p p^{a_p}$ and an integer $j\ge1$, define
\begin{equation}
 q_j(H):=\min\{q\text{ prime}:a_q<j\}.
 \label{eq:frontier-definition}
\end{equation}
Every prime below $q_j(H)$ has exponent at least $j$.
\end{definition}

Consider a two-deletion support-loss pattern: the largest support prime $P$ is removed, and a layer $t^A\to t^{A-1}$ is deleted with $A\ge2$.

\begin{lemma}[No-crossing plateau endpoint]
\label{lem:plateau-endpoint}
If no target insertion occurs at a prime $q\ge t$, then
\begin{equation}
 \boxed{q_A(H)=t^+,}
 \label{eq:plateau-endpoint}
\end{equation}
where $t^+$ is the prime immediately following $t$.
\end{lemma}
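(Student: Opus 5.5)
The plan is to pin down $q_A(H)$ by proving two inequalities: every prime $q\le t$ has $a_q\ge A$, so $q_A(H)\ge t^+$; and $a_{t^+}<A$, so $q_A(H)\le t^+$. Both should follow from the non-increasing exponent shape of highly composite numbers in \Cref{lem:basic-hcn-shape}. I apply that lemma once to $H$ and once to $H'$. Only the second application uses the no-crossing hypothesis.

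\emph{Lower bound.} The deleted layer $t^A\to t^{A-1}$ is a layer of $H$, so $a_t=A$. Applying \Cref{lem:basic-hcn-shape} to $H$ gives $a_q\ge a_t=A$ for every prime $q\le t$. Hence no prime up to $t$ lies in the set defining $q_j(H)$ in \Cref{def:frontier} with $j=A$, and $q_A(H)\ge t^+$.

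\emph{Upper bound.} Suppose, for contradiction, that $a_{t^+}\ge A$. Monotonicity in $H$ gives $a_{t^+}\le a_t=A$, so $a_{t^+}=A\ge2$.

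\begin{enumerate}
\item The prime $t^+$ is not $P$. Indeed $a_P=1$ while $a_{t^+}\ge2$.
\item The coordinate $t^+$ is not deleted. In this pattern $L_-=2$, and the only deletions are $P^1\to P^0$ and the single layer at $t$.
\item The coordinate $t^+$ receives no insertion, because $t^+\ge t$ and we are assuming no crossing.
\end{enumerate}

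It follows that $b_{t^+}=a_{t^+}=A$. On the other hand, the $t$-coordinate loses exactly one layer and, by the same no-crossing hypothesis, gains none, so $b_t=A-1$. Then $t<t^+$ and $b_t<b_{t^+}$, which contradicts \Cref{lem:basic-hcn-shape} applied to the highly composite number $H'$. Therefore $a_{t^+}\le A-1<A$, so $q_A(H)\le t^+$. Combined with the lower bound, $q_A(H)=t^+$.

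The only step needing care is the bookkeeping that the $t^+$ coordinate is untouched by the transition. This is where three facts are used: $L_-=2$, the assumption $A\ge2$ (which separates $t^+$ from $P$), and the no-crossing assumption (which also rules out an insertion at $t$ itself). Everything else is a direct appeal to monotonicity.
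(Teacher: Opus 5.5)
Your proof is correct and follows the paper's argument. Source monotonicity in $H$ gives $a_q\ge A$ for all $q\le t$, and monotonicity in $H'$, together with the untouched $t^+$-coordinate and $b_t=A-1$, gives $a_{t^+}\le A-1$; the only cosmetic difference is that you argue by contradiction, using $a_P=1<2\le A$ to exclude $t^+=P$, where the paper splits into the cases $t^+<P$ and $t^+=P$.
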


\begin{proof}
Source monotonicity gives $a_q\ge A$ for every prime $q\le t$.  Under no crossing, no exponent at a prime $q>t$ increases, and the only decreasing coordinates are $t$ and $P$.  If $t^+<P$, target monotonicity and $b_t=A-1$ give
\[
 a_{t^+}=b_{t^+}\le b_t=A-1.
\]
If $t^+=P$, then $a_{t^+}=1\le A-1$.  Thus the first prime whose source exponent is below $A$ is exactly $t^+$.
\end{proof}

The hypothetical obstruction places the relevant frontier directly next to the deletion prime.  By Bertrand's postulate, $t^+<2t$ \citep[Ch.~V]{HardyWright2008}.  Even before one knows whether the frontier layer belongs to the target, it acts as a local half-safe catalyst.

\begin{proposition}[Plateau-frontier catalyst]
\label{prop:frontier-catalyst}
Under the hypotheses of \Cref{lem:plateau-endpoint}, let $q=q_A(H)=t^+$.  The three-step walk
\begin{equation}
 H\longrightarrow\frac HP
 \longrightarrow\frac{Hq}{P}
 \longrightarrow\frac{Hq}{Pt}
 \label{eq:frontier-catalyst-walk}
\end{equation}
never exceeds $H$ and never has fewer than $d(H)/2$ divisors.
\end{proposition}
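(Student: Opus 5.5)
The plan is to check the three moves in \eqref{eq:frontier-catalyst-walk} one at a time. For each move I will track the multiplicative size and the divisor count separately, using only the frontier identification $q=t^+$ from \Cref{lem:plateau-endpoint} and source monotonicity.

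\emph{Size.} Since $t$ is a deletion prime distinct from the largest support prime $P$, we have $t<P$, and hence $q=t^+\le P$. The first state $H/P$ is below $H$. The second state satisfies $Hq/P\le H$, with equality exactly when $t^+=P$; this is allowed, since the claim is only that the walk never exceeds $H$. The third state is $Hq/(Pt)<Hq/P\le H$. All three moves are legitimate exponent moves: $P\mid H$, and $t\mid Hq/P$ because $t\ne q$ and $t\ne P$, so the $t$-exponent is still $A\ge2$ when it is deleted.

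\emph{Divisor count.} The first move removes $P^1$, so $d(H/P)=d(H)/2$. For the second and third moves I split into two cases.
\begin{itemize}
\item If $q=P$, the insertion restores the $P$-exponent from $0$ to $1$. The divisor count returns to $d(H)$, and the $t$-deletion then gives $d(H)\,A/(A+1)\ge \tfrac23 d(H)>d(H)/2$.
\item If $q\ne P$, write $a_q=\vpp qH$. Since $q=q_A(H)$, the frontier definition gives $a_q\le A-1$. The insertion multiplies $d$ by $(a_q+2)/(a_q+1)$. Because $(j+1)/j$ decreases in $j$, this factor is at least $(A+1)/A$. The $t$-deletion, $A\to A-1$, then multiplies by $A/(A+1)$. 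The net factor over the last two moves is therefore at least $1$. Hence the middle state has at least $d(H)/2$ divisors, and so does the final state.
\end{itemize}

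The only genuinely delicate point is the bound $a_q\le A-1$ for the inserted frontier prime, together with ruling out an overshoot of the ceiling. Both are already encoded in \Cref{lem:plateau-endpoint}, which places the frontier exactly at $t^+$. The boundary case $t^+=P$ needs the separate treatment above, because there the inserted layer is the support layer just removed.
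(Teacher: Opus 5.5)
Your proof is correct and follows the paper's argument: the size bound comes from $q=t^+\le P$, and the divisor bound comes from the frontier inequality, which gives an insertion multiplier of at least $(A+1)/A$ that cancels the $t$-deletion factor $A/(A+1)$. Your separate treatment of $t^+=P$ is a harmless refinement rather than a necessity, since the paper's uniform argument already covers that case: the $q$-exponent of $H/P$ is then $0\le A-1$.
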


\begin{proof}
Since $q\le P$, all displayed integers are at most $H$.  The first deletion gives divisor ratio $1/2$.  At the frontier $q_A$, the exponent of $q$ is at most $A-1$, so insertion at $q$ has multiplier at least $(A+1)/A$.  The subsequent deletion at $t$ has multiplier $A/(A+1)$.  The final normalised divisor count is therefore at least
\[
 \frac12\frac{A+1}{A}\frac{A}{A+1}=\frac12,
\]
and the intermediate level is larger.
\end{proof}

The walk may use a layer absent from $H'$, so it is not by itself a geodesic certificate.  Its role is to identify a cheap entropy-restoring packet.  The next theorem turns such packets into lower bounds on the location of a frontier.

\begin{theorem}[Frontier-pinning inequality]
\label{thm:frontier-pinning}
Let $H$ be highly composite and fix $j\ge1$.  Choose distinct primes
\[
 r_1,\ldots,r_k<q_j(H)
\]
and let $U$ be supported away from the $r_i$.  Put
\begin{equation}
 \Gamma_H(U):=\frac{d(HU)}{d(H)}.
\end{equation}
If
\begin{equation}
 \Gamma_H(U)\ge\left(\frac{j+1}{j}\right)^k,
 \label{eq:frontier-gain-condition}
\end{equation}
then
\begin{equation}
 \boxed{U\ge r_1\cdots r_k.}
 \label{eq:frontier-pinning-conclusion}
\end{equation}
\end{theorem}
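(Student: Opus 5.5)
Argue by contradiction with an exchange competitor, in the spirit of \Cref{lem:support-prime-floor}.  Suppose $U<r_1\cdots r_k$, and put
\[
 M:=\frac{HU}{r_1\cdots r_k}.
\]
Since each $r_i<q_j(H)$, \Cref{def:frontier} gives $a_{r_i}\ge j\ge1$, so each $r_i$ divides $H$ and $M$ is an integer.  Because $U$ is supported away from the $r_i$, the quotient leaves $U$ untouched, and $M<H$ follows directly from $U<r_1\cdots r_k$.

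The plan is to show $d(M)\ge d(H)$, which contradicts the strict record property of $H$.  Write
\[
 d(M)=d(HU)\prod_{i=1}^k\frac{a_{r_i}}{a_{r_i}+1}.
\]
This factorisation is valid because the exponent of $r_i$ in $HU$ is exactly $a_{r_i}$ (as $U$ is coprime to $r_i$), and the $r_i$ are distinct, so each coordinate drops by exactly one layer.  Since $t\mapsto t/(t+1)$ is increasing and $a_{r_i}\ge j$, every factor is at least $j/(j+1)$.  Hence
\[
 d(M)\ge d(H)\,\Gamma_H(U)\left(\frac{j}{j+1}\right)^k\ge d(H)
\]
by \eqref{eq:frontier-gain-condition}.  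This contradicts $d(M)<d(H)$ for $M<H$, and so $U\ge r_1\cdots r_k$.

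The only delicate points are bookkeeping.  Distinctness of the $r_i$ ensures that no coordinate loses two layers, and the support condition on $U$ ensures that the removed layers sit at exponent levels $a_{r_i}\ge j$ rather than at shifted levels.  I expect no real obstacle beyond stating these two facts explicitly.  The strictness of recordhood is exactly what makes the non-strict hypothesis \eqref{eq:frontier-gain-condition} sufficient.
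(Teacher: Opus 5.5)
Your proposal is correct and follows essentially the same route as the paper: you form the competitor $M=HU/(r_1\cdots r_k)<H$ and use disjointness of supports to get $d(M)/d(H)=\Gamma_H(U)\prod_i a_{r_i}/(a_{r_i}+1)\ge1$, which contradicts strict recordhood. Your explicit bookkeeping remarks (distinctness of the $r_i$ and coprimality of $U$ to them) are exactly the facts the paper's one-line ``the supports are disjoint'' relies on.
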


\begin{proof}
Every $r_i<q_j(H)$ has exponent at least $j$.  Deleting one layer at $r_i$ multiplies the divisor count by
\[
 \frac{a_{r_i}}{a_{r_i}+1}\ge\frac{j}{j+1}.
\]
If $U<r_1\cdots r_k$, then
\[
 M=H\frac{U}{r_1\cdots r_k}<H.
\]
The supports are disjoint, so
\[
 \frac{d(M)}{d(H)}
 =\Gamma_H(U)\prod_{i=1}^k\frac{a_{r_i}}{a_{r_i}+1}
 \ge1.
\]
This contradicts strict recordhood.
\end{proof}

For $j=2$ the theorem says that a packet compensating two second-layer deletions cannot be too cheap unless the first exponent-one prime is correspondingly small.  The following concrete bounds are useful in the finite two-deletion analysis.

\begin{corollary}[Four explicit second-layer frontier bounds]
\label{cor:q2-bounds}
Let $q_2=q_2(H)$.  Then
\begin{alignat}{2}
 (a_2,a_3,a_5)=(6,3,3)&\quad\Longrightarrow\quad q_2\le13,\qquad&
 (a_2,a_3,a_5)=(8,4,3)&\quad\Longrightarrow\quad q_2\le23,\nonumber\\
 (a_2,a_3,a_5)=(7,5,3)&\quad\Longrightarrow\quad q_2\le23,&
 (a_2,a_3,a_5)=(8,5,3)&\quad\Longrightarrow\quad q_2\le29.
 \label{eq:q2-bounds}
\end{alignat}
\end{corollary}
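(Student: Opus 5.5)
Each of the four implications will be obtained from \Cref{thm:frontier-pinning} with $j=2$, argued by contradiction. Suppose $q_2=q_2(H)$ exceeds the claimed bound. Then every prime below $q_2$ has exponent at least $2$, so every prime up to the bound is an admissible choice of $r_i$. I will pick $k$ distinct primes $r_1,\dots,r_k$ at or just below the bound, for example $k=2$ primes among $\{11,13\}$, $\{19,23\}$, $\{23,29\}$. I will then pick a cheap packet $U$, supported on $\{2,3,5\}$ and hence away from the $r_i$, such that:
\[
 \Gamma_H(U)\ge\left(\tfrac32\right)^k
 \qquad\text{and}\qquad
 U<r_1\cdots r_k.
\]
This contradicts \eqref{eq:frontier-pinning-conclusion}. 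Since $(a_2,a_3,a_5)$ is fixed in each case, $\Gamma_H(U)$ is an explicit rational number:
\[
 \Gamma_H(2^x3^y5^z)=\frac{a_2+x+1}{a_2+1}\cdot\frac{a_3+y+1}{a_3+1}\cdot\frac{a_5+z+1}{a_5+1}.
\]
So each case reduces to a finite search over small exponent triples $(x,y,z)$.

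Take the first case, $(6,3,3)$, and suppose $q_2\ge17$. Then the exponents of $11$ and $13$ are at least $2$, and I take $k=2$ with $r_1r_2=143$ and target gain $9/4$. Candidate packets are $U=2^x3^y5^z$ with gain $\frac{7+x}{7}\frac{4+y}{4}\frac{4+z}{4}$. For instance $U=3\cdot5\cdot 2^3=120<143$ gives $\frac{10}{7}\cdot\frac54\cdot\frac54=\frac{250}{112}\approx2.23$. This narrowly misses $9/4$, so I will search nearby triples, and if needed use $k=3$ with a third prime $r_3\in\{7\}$. That is legal only if $7$ is not in the support of $U$, which holds automatically. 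The other three cases are handled the same way with larger $r_i$: for $q_2\ge29$ one has $r_1r_2=19\cdot23=437$, and for $q_2\ge31$ one has $r_1r_2=23\cdot29=667$. These larger budgets leave much more room, and I expect packets such as $2^a3^b5^c$ with value in the few hundreds to succeed.

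The main obstacle is the bookkeeping of finding, for each exponent triple, one explicit $U$ and one choice of primes $r_i$ for which both inequalities hold. The margins can be tight, as the first case shows, so I will be prepared to change $k$ (using $k=3$ with an extra deleted prime such as $7$ or $17$) or to let $U$ include $7$ when $7$ is not among the $r_i$. Once a witness pair $(U,\{r_i\})$ is found, the check is an exact rational comparison and an integer comparison, and it concludes each case by \Cref{thm:frontier-pinning}.
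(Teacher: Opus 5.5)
Your framework is the one the paper uses: \Cref{thm:frontier-pinning} with $j=k=2$, the same prime pairs $\{11,13\}$, $\{19,23\}$, $\{23,29\}$, and a packet $U$ supported on $\{2,3,5\}$. The gap is that the corollary consists of nothing but four finite verifications, and you carry out none of them. You only predict that suitable packets exist.

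In the first case your trial packet $2^3\cdot3\cdot5=120$ fails, since its gain is $125/56<9/4$. The fallback you propose also cannot work. With $r_3=7$ you would need $\Gamma_H(U)\ge27/8$ for some $U<7\cdot11\cdot13=1001$. But for $(a_2,a_3,a_5)=(6,3,3)$, every $\{2,3,5\}$-packet below $1001$ has gain at most $3$, attained at $2^5 3^3=864$. And $17$ is not an admissible $r_i$ when you only know $q_2\ge17$.

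Nor do the later cases leave ``much more room''. For $(8,5,3)$ the packet $360$ gives only $20/9<9/4$, and the successful packets in cases two to four clear $9/4$ only narrowly.

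What is missing is the explicit witnesses, which your search over nearby triples would have found. For $(6,3,3)$ take $U=2^2 3^3=108<143$, with $\Gamma_H(U)=\frac97\cdot\frac74=\frac94$ exactly. Equality is enough because \eqref{eq:frontier-gain-condition} is non-strict: the contradiction in \Cref{thm:frontier-pinning} is $d(M)\ge d(H)$ with $M<H$.

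For $(8,4,3)$ and $(7,5,3)$ take $U=2^3 3^2 5=360<437$, with gains $\frac43\cdot\frac75\cdot\frac54=\frac73$ and $\frac{11}8\cdot\frac43\cdot\frac54=\frac{55}{24}$ respectively. For $(8,5,3)$ take $U=2^5\cdot3\cdot5=480<667$, with gain $\frac{14}9\cdot\frac76\cdot\frac54=\frac{245}{108}>\frac94$.

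With these four checks inserted, your argument is exactly the paper's proof.
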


\begin{proof}
Apply \Cref{thm:frontier-pinning} with $j=k=2$.  For $(6,3,3)$, the packet $U=2^2 3^3=108$ has divisor multiplier $9/4$.  If $q_2\ge17$, then $11,13<q_2$ but $11\cdot13=143>108$, a contradiction.

For $(8,4,3)$, the packet $U=2^3 3^2 5=360$ has multiplier $7/3>9/4$; if $q_2\ge29$, then $19\cdot23=437>360$.  For $(7,5,3)$ the same packet has multiplier $55/24>9/4$, giving the same bound.  Finally, for $(8,5,3)$ the packet $U=2^5 3\,5=480$ has multiplier $245/108>9/4$; $q_2\ge31$ would give $23\cdot29=667>480$.
\end{proof}

\section{A generic scheduling counterexample}
\label{app:generic-scheduling-counterexample}

A still more abstract warning is useful.  Total resource and total entropy alone do not force a one-layer barrier.

\begin{proposition}[Generic resource balancing does not imply a half barrier]
\label{prop:generic-scheduling-counterexample}
Consider a resource ceiling $15/2$.  The source has two removable items, each of weight $3$ and value $1$, while the target has three insertable items, each of weight $5/2$ and value $2/3$.  Source and target both have total value $2$, and the target has the larger total weight.  Nevertheless every feasible schedule passes through value $2/3$, which is more than one maximal atomic loss below the starting value.
\end{proposition}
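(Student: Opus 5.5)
The plan is to enumerate the state space exactly, in the same way that \Cref{thm:dp} treats the arithmetic box, but now for the abstract items. Since all removable items are interchangeable, and likewise all insertable items, a state is determined by a pair $(i,j)$. Here $0\le i\le2$ counts removed source items and $0\le j\le3$ counts inserted target items. Its resource and value are
\[
 W(i,j)=6-3i+\tfrac52 j,
 \qquad
 V(i,j)=2-i+\tfrac23 j .
\]
A schedule is a monotone lattice path from $(0,0)$ to $(2,3)$ that stays in the region $W\le 15/2$. First I will record the endpoint data stated in the proposition. The source has total weight $6$ and value $2$, and the target has weight $15/2$ and value $2$, so the target is heavier and both endpoints sit at the ceiling or below it.

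Next I will solve the ceiling constraint for each $i$. The condition $W(i,j)\le15/2$ is equivalent to $\tfrac52 j\le \tfrac32+3i$. For $i=0$ this forces $j=0$, so the first move must be a removal; this is the abstract analogue of \Cref{cor:tunnelling}. For $i=1$ it gives $j\le 1$, since $j=2$ would need $5\le 9/2$, which fails. For $i=2$ every $j\le3$ is allowed.

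The key step concerns the second removal. It is a move $(1,j)\to(2,j)$, and admissibility of $(1,j)$ forces $j\le1$. The state reached is therefore $(2,j)$ with $j\le1$, whose value is $V(2,j)=\tfrac23 j\le \tfrac23$. So every feasible schedule visits a state of value at most $2/3$. The schedule remove, insert, remove, insert, insert attains exactly $2/3$, since its values are $2,1,\tfrac53,\tfrac23,\tfrac43,2$ and all its weights stay at most $15/2$. Hence the optimal bottleneck equals $2/3$, and every schedule passes through value $2/3$ or lower, which gives the claim as stated.

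Finally, the largest atomic loss is $1$, coming from removing one source item. Starting from value $2$, one such loss would leave value $1$, whereas the schedule is forced down to $2/3<1$; this is the sense in which the drop exceeds one maximal atomic loss. I do not expect a real obstacle here: the proof is a short finite check. The only point needing care is the bound $j\le1$ at $i=1$, because it carries the whole argument, and I will verify that arithmetic explicitly.
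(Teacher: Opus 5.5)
Your proof is correct and is essentially the paper's argument: the first move must be a removal since $6+\tfrac52>\tfrac{15}{2}$, and with one source item left at most one target item fits since $3+2\cdot\tfrac52=8>\tfrac{15}{2}$, so the second removal lands at value at most $\tfrac23$. Your lattice bookkeeping is slightly more careful than the paper's wording, since it also covers the case $j=0$ where the paper says ``only one target item is present''. Your explicit schedule attaining $\tfrac23$ is a harmless extra that shows the bottleneck is exactly $\tfrac23$.
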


\begin{proof}
Initially no target item fits, because $6+5/2>15/2$, so one source item must be removed.  One target item can then be inserted, but two cannot coexist with the remaining source item: $3+2(5/2)=8>15/2$.  The second source item must therefore be removed while only one target item is present, leaving value $2/3$.
\end{proof}

The arithmetic half barrier, if universal, must consequently use more than a generic knapsack principle.  The special spectrum \eqref{eq:layer-data}, stack precedence, and the record-box gap are genuine arithmetic input.

\section{Detailed certificate audit for the two-deletion stratum}
\label{app:l2-certificate-audit}

The exact reduction of \Cref{thm:l2-reduction} is evaluated independently of the full path recursion.  The results are summarised in \Cref{tab:l2-certificates}.

\begin{table}[t]
\centering
\caption{Certificate accounting for the $301$ transitions with $L_-=2$.}
\label{tab:l2-certificates}
\begin{tabularx}{0.92\textwidth}{@{}Xr@{}}
\toprule
Class or verification & Count\\
\midrule
All two-deletion transitions & $301$\\
Certified directly by the gcd route & $236$\\
Gcd-hard transitions & $65$\\
\quad support-loss patterns & $61$\\
\quad two level-$2$ deletions & $4$\\
Exact-reduction mismatches with full DP & $0$\\
Prime-crossing certificates among hard cases & $65$\\
Source-profile packing certificates among hard cases & $65$\\
Automatic threshold-$2$ support cases & $42$\\
Nontrivial frontier-competition wins & $19$\\
Frontier-competition failures & $0$\\
\bottomrule
\end{tabularx}
\end{table}

The hard deletion-level pairs are
\begin{equation}
 (1,3)^{10},\ (1,5)^4,\ (1,6)^5,\ (1,9)^{22},\
 (1,10)^{17},\ (1,11)^3,\ (2,2)^4,
 \label{eq:hard-pattern-counts}
\end{equation}
where superscripts denote multiplicities.  In all $65$ hard cases the exact source-packing score satisfies
\begin{equation}
 \delta\,G_H(t,m)\le\frac78<1.
 \label{eq:finite-packing-max}
\end{equation}
Thus \Cref{thm:source-packing} forces a target insertion at or beyond the relevant deletion prime, and \Cref{thm:prime-crossing} gives the half-capacity certificate.  For the $19$ support-loss cases with threshold $t>2$, exact repair-floor competition also succeeds; the largest ratio of frontier-assisted repair cost to left-only repair cost is $11/18$.

\section*{Acknowledgements}
\paragraph{AI disclosure.}
OpenAI's ChatGPT was used as a writing and code-assistance tool to reorganise a substantially longer exploratory manuscript, improve exposition, check notation and LaTeX, assist with bibliographic metadata, and help run and review the reproducibility scripts.  Every mathematical statement, proof, computation, citation, and final editorial decision was reviewed by the author, who accepts full responsibility for the contents.

\section*{Data and code availability}
 The data and code supporting this article are archived at Zenodo, version 1.0.0, DOI: \zenododoi.  The archive contains the exact input table, all analysis programs, the figure generator, the computational environment, execution logs, expected outputs, and a SHA-256 manifest.  The arXiv source package contains only the files required to compile the article.

\begingroup
\small
\setlength{\bibsep}{0.35em}
\bibliographystyle{abbrvnat}
\bibliography{references}
\endgroup

\end{document}